\documentclass{article}

\usepackage{amsmath,amsfonts,latexsym,amssymb,amsthm,url, titling}

\newtheorem{theorem}{Theorem}[section]
\newtheorem{proposition}[theorem]{Proposition}
\newtheorem{corollary}[theorem]{Corollary}
\newtheorem{lemma}[theorem]{Lemma}
\newtheorem{remark}[theorem]{Remark}

\DeclareMathOperator{\height}{h}
\DeclareMathOperator{\Height}{H}

\DeclareMathOperator{\im}{Im}

\DeclareMathOperator{\genus}{\mathbf{g}}
\DeclareMathOperator{\Gal}{Gal}

\newcommand{\C}{\mathbb{C}}
\newcommand{\Q}{\mathbb{Q}}
\newcommand{\R}{\mathbb{R}}
\newcommand{\Z}{\mathbb{Z}}

\newcommand{\AAA}{\mathcal{A}}

\newcommand{\LL}{\mathcal{L}}

\newcommand{\OO}{\mathcal{O}}
\newcommand{\SSS}{\mathcal{S}}

\newcommand{\eps}{\varepsilon}

\newcommand{\tilP}{\widetilde{P}}
\newcommand{\tily}{\tilde{y}}

\title{Values of algebraic functions at Liouville numbers}

\author{Yuri Bilu, Diego Marques\thanks{Supported by the Brazilian-French Network in Mathematics.}}

\date{\today}

\numberwithin{equation}{section}

\makeatletter

\renewcommand*\l@section[2]{
	\ifnum \c@tocdepth >\z@
	\addpenalty\@secpenalty
	\addvspace{0.2em \@plus\p@}
	\setlength\@tempdima{1.5em}
	\begingroup
	\parindent \z@ \rightskip \@pnumwidth
	\parfillskip -\@pnumwidth
	\leavevmode \bfseries
	\advance\leftskip\@tempdima
	\hskip -\leftskip
	#1\nobreak\hfil \nobreak\hb@xt@\@pnumwidth{\hss #2}\par
	\endgroup
	\fi}

\makeatother

\begin{document}
\hfuzz=4pt

\maketitle

\begin{abstract}
In 1953 LeVeque proved the existence of $U_m$-numbers by showing that for some specially defined  Liouville number~$\lambda$, the $m$th root $\lambda^{1/m}$ is in~$U_m$. In this article we study the following question: let~$u$ be an algebraic function of degree~$m$ and~$\lambda$ a Liouville number; under which conditions is $u(\lambda)$ a $U_m$-number? We consider a more refined notion of $\LL$-numbers, and show that, under very general assumptions, an algebraic function of degree~$m$ takes $U_m$-values at all $\LL$-numbers.

\end{abstract}

 {\footnotesize
 
\tableofcontents

}

\section{Introduction}
\label{sintro}

In this article~$m$ denotes a positive integer. LeVeque~\cite{Le53} introduced~$U_m$ as the set of the complex transcendental  numbers that can be well approximated by algebraic numbers of degree~$m$ but not by algebraic numbers of smaller degree. Precisely, a complex transcendental number~$\lambda$ belongs to~$U_m$ if~$m$ is the smallest integer with the following property:  there exists an infinite sequence of complex algebraic numbers $(\beta_n)_{n\ge0}$ of degree~$m$ and an infinite sequence of positive real numbers $(w_n)_{n\ge 0}$  such that 
\begin{equation}
\label{edefum}
\height(\beta_n)\to \infty, \qquad w_n\to \infty, \qquad |\lambda-\beta_n|\le e^{-wn\height(\beta_n)} \quad (n\ge 0). 
\end{equation}
Here ${\height(\cdot)}$ denotes the usual absolute logarithmic height. (LeVeque uses a different, but equivalent language,  see \cite[Remark~2.6]{BMM26}.) The elements of the set~$U_m$ are called \textit{$U_m$-numbers}. In particular, the Liouville numbers are exactly the $U_1$-numbers.

LeVeque \cite[Theorem~5]{Le53} shows that $U_m$-numbers exist for every~$m$, by showing that for some specially defined  Liouville number~$\lambda$, the $m$th root $\lambda^{1/m}$ is in~$U_m$; see also Bugeaud~\cite[Theorem~7.4]{Bu04}.

In this article we study the following question: given an algebraic function~$u$ of degree~$m$ and a Liouville number~$\lambda$, under which conditions can we  expect $u(\lambda)$ to be a~$U_m$-number?  

Here, by an \textit{algebraic function of degree~$m$ } we mean a holomorphic function~$u$, which is defined on a non-empty  open   subset\footnote{Since we are studying the values of our functions at Liouville numbers, which are real, we will always tacitly assume that the domain of definition of every algebraic function below has a non-empty intersection with~$\R$.} of~$\C$ and  is algebraic of degree~$m$ over the field of rational functions $\Q(z)$;   that is, there exists a polynomial ${P(X,Y) \in \Q[X,Y]}$, irreducible in the ring $\Q[X,Y]$, such that ${\deg_YP=m}$ and ${P(z,u(z))=0}$ for all~$z$ in the domain of definition of~$u$.  This polynomial will be called the \textit{minimal polynomial}  of~$u$; it is well-defined up to a non-zero constant factor. 

Of course, one cannot expect  $u(\lambda)$ to be a~$U_m$-number for any~$u$ of degree~$m$ and any Liouville number~$\lambda$. For instance, if ${\lambda=\kappa^2}$, where~$\kappa$ is another Liouville number, then $\lambda^{1/2}$ is not in~$U_2$. 

Moreover, assume that~$u$ is a \textit{real-valued} algebraic function of degree ${m>1}$; that is, there is a non-empty interval ${I\subseteq \R}$, where~$u$ is defined and takes real values. In this case, which is the most interesting in our context,~$u$ cannot take $U_m$-values at all Liouville numbers, by the following general principle, based on the Baire category theorem (see, for instance, \cite[Corollary~2]{STW14}).

\begin{proposition}
\label{prbaire}
Let ${I\subset \R}$ be a non-empty open interval and ${f:I\to\R}$ a continuous nowhere locally constant  function. Then there exist uncountably many Liouville numbers ${\lambda\in I}$ such that $f(\lambda)$ is a Liouville number as well. 
\end{proposition}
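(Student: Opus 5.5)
The plan is a Baire category argument in the complete metric space $I$. A set is $G_\delta$ if it is a countable intersection of open sets; in a Baire space a countable intersection of dense open sets is dense. We show that the set $\mathcal{L}\cap I$ of Liouville numbers in $I$ and the set $f^{-1}(\mathcal{L})$ are both dense $G_\delta$ subsets of $I$. Then their intersection is a dense $G_\delta$ in $I$.

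First, $\mathcal{L}$ is a dense $G_\delta$ in $\R$. Write
\[
\mathcal{L}=\bigcap_{n\ge1} V_n, \qquad V_n=\bigcup_{q\ge2}\bigcup_{p\in\Z}\Bigl(\frac pq-q^{-n},\frac pq+q^{-n}\Bigr)\setminus\Bigl\{\frac pq\Bigr\}.
\]
Each $V_n$ is open. Each $V_n$ is dense, because it contains all Liouville numbers, which are dense (for example $\sum 10^{-k!}$ plus rationals). The intersection $\bigcap_n V_n$ is exactly the set of irrationals admitting approximations $|x-p/q|<q^{-n}$ for every $n$, which is $\mathcal{L}$. If one prefers to avoid excluding rationals inside $V_n$, intersect additionally with $\R\setminus\{r\}$ for each rational $r$. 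This still gives countably many dense open sets.

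Second, consider $W_n=f^{-1}(V_n)\cap I$. Each $W_n$ is open because $f$ is continuous. The main point is density. Take a nonempty open subinterval $J\subseteq I$. Since $f$ is nowhere locally constant, $f(J)$ is a nondegenerate interval by the intermediate value theorem. Hence $f(J)$ contains an open interval, which meets the dense open set $V_n$. So $J\cap W_n\ne\emptyset$. Likewise, for each rational $r$ the set $f^{-1}(\R\setminus\{r\})$ is open and dense in $I$. Indeed, $f^{-1}(r)$ is closed, and it has empty interior since $f$ is not constant on any interval.

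Intersecting all of these countably many dense open subsets of $I$ together with the $V_n\cap I$ and the $I\setminus\{r\}$, Baire's theorem yields a dense $G_\delta$ set $G\subseteq I$. Every $\lambda\in G$ is a Liouville number with $f(\lambda)$ Liouville.

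Finally, uncountability. A dense $G_\delta$ subset of an interval is uncountable. If it were countable, say $\{x_1,x_2,\dots\}$, then the sets $I\setminus\{x_k\}$ are dense open, and adding them to the family would give an empty intersection of countably many dense open sets, contradicting Baire.

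The only delicate step is the density of $f^{-1}(V_n)$. This is exactly where the hypothesis "nowhere locally constant" enters, via the intermediate value theorem: it guarantees that images of open intervals have nonempty interior.
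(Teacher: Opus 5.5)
Your Baire-category argument is correct and is essentially the route the paper has in mind: the paper gives no proof of its own but refers to \cite[Corollary~2]{STW14}, which also rests on the Liouville numbers forming a dense $G_\delta$ set whose preimage under a continuous nowhere locally constant map is again a dense $G_\delta$ set. The only slip is calling $I$ a complete metric space (a bounded open interval is not complete in the usual metric), but $I$ is still a Baire space, being an open subset of $\R$ (or homeomorphic to $\R$), so the argument goes through unchanged.
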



\subsection{$\LL$-numbers}
\label{ssll}

Thus, some conditions have to be imposed. Let~$\LL$ be the set of real numbers~$\lambda$ with the following property: there exist a sequence $(\alpha_n)_{n\ge 0}$ of rational numbers and 
a  sequence $(v_n)_{n\ge 0}$ of positive real numbers 
such that 
\begin{equation}
\label{eliouv}
\height(\alpha_n)\to\infty, \qquad v_n\to \infty, \qquad \left|\lambda-\alpha_n\right|\le e^{-v_n\height(\alpha_n)}  \qquad (n\ge 0), 
\end{equation}
and, moreover, there exists ${A>0}$  such that 
\begin{equation}
\label{ensparse}
\height(\alpha_{n+1})\le Av_n\height(\alpha_n) \qquad (n\ge 0). 
\end{equation}
Condition~\eqref{ensparse} 
can be informally expressed as ``good rational approximations for~$\lambda$ are not too sparse''.

The elements of the set~$\LL$ will be called $\LL$-numbers. Clearly, $\LL$-numbers are Liouville numbers, but the converse is not true: using continued fractions, it is easy to give an example of a Liouville number which is not in~$\LL$.

The $\LL$-numbers were introduced and explored in~\cite{CMT21} as a reasonable substitute for the notion of \textit{strong Liouville numbers}.  These latter were introduced by LeVeque~\cite{Le53} and studied by Erd\H os~\cite{Er62},  Alniaçik~\cite{Al79}, Petruska~\cite{Pe92} and others. A Liouville number~$\lambda$ is called \textit{strong}  if ${|\lambda-p_n/q_n|<q_n^{-w_n}}$ with ${w_n\to+\infty}$, where $(p_n/q_n)$ is the sequence of the convergents of the continued fraction of~$\lambda$.   LeVeque~\cite{Le53} claims that the Liouville number used in the proof of his Theorem~5 is   strong, without providing any justification. In fact, it is not true\footnote{Fortunately, this mistake does not affect the validity of LeVeque's  proof of  his Theorem~5.}, as follows from the work of Petruska~\cite{Pe92}. Actually, Petruska proves that neither of the ``famous'' Liouville numbers (including Liouville's original $\sum2^{-n!}$) is strong. Note that we are not aware of a single example of a strong Liouville number except those constructed using continued fractions with the special purpose to be strong.    

As follows from the standard properties of continued fractions, any strong Liouville number is an $\LL$-number. Moreover, $\sum2^{-n!}$ and similar numbers are $\LL$-numbers as well, which means that $\LL$-numbers is a much wider class than strong Liouville numbers.

Many known results about strong Liouville numbers extend, without much effort, to $\LL$-numbers. This concerns, in particular, Theorems~1 and~2 of Alniaçik~\cite{Al79}, see also \cite{CM14,CMT21}.

On the contrary, if one tries to relax the definition of $\LL$-numbers, many results no longer remain  true~\cite{CMT21}.   This implies that $\LL$-numbers is a reasonable subset of Liouville numbers to study: it is sufficiently rich to contain interesting examples, and sufficiently refined to have many desired properties. 

The reader may also consult the work of Kekeç \cite{Ke11,Ke11a,Ke26}; in particular, in the recent article~\cite{Ke26} the notions of $p$-adic $U_m$-number and $p$-adic strong Liouville number are explored, see Theorem~3.1 and Example~5.1 therein. 

\subsection{Results}

Our principal results are  three theorems below. Two of them concern values of algebraic functions at~$\LL$-numbers, and the third one at all the Liouville numbers. 

\begin{theorem}
\label{throot}
Assume that ${m>1}$ and denote by~$p$ the smallest prime divisor of~$m$. 
Let ${Q(z) \in \Q[z]}$ be a polynomial having at least~$k$ simple zeros in~$\bar\Q$, where 
\begin{equation}
\label{enumbersimpleroots}
k=
\begin{cases}
5, & \text{if $p=2$}, \\
4, & \text{if $p=3$}, \\
2, & \text{if $p\ge 5$}. 
\end{cases}
\end{equation}
Then for any~$\LL$-number~$\lambda$,  the $m$th root $Q(\lambda)^{1/m}$ is a $U_m$-number. (This is true for any definition of the $m$th root.) 
\end{theorem}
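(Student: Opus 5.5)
The proof has two halves. First, $\theta:=Q(\lambda)^{1/m}$ admits very good approximations by algebraic numbers of degree exactly~$m$. Second, the sparseness condition~\eqref{ensparse} rules out good approximations by algebraic numbers of degree ${d<m}$. Transcendence of~$\theta$ is immediate: if~$\theta$ were algebraic, then ${Q(\lambda)=\theta^m}$ would be algebraic, hence~$\lambda$ would be algebraic, which is impossible for a Liouville number. Note that ${Q(\lambda)\ne0}$ since~$\lambda$ is transcendental.

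\textbf{Upper approximation.} Let $(\alpha_n)$, $(v_n)$ be as in~\eqref{eliouv}. Since ${Q(\lambda)\neq 0}$ and $\alpha_n\to\lambda$, the map $z\mapsto z^{1/m}$ is analytic near~$Q(\lambda)$ for the chosen branch. Let~$\beta_n$ be the value of that branch at~$Q(\alpha_n)$; it is a root of ${X^m-Q(\alpha_n)}$. Then
\[
|\theta-\beta_n|\le C|\lambda-\alpha_n|\le Ce^{-v_n\height(\alpha_n)},
\qquad
\height(\beta_n)=\tfrac1m\height(Q(\alpha_n))\le \tfrac{\deg Q}{m}\height(\alpha_n)+O(1).
\]
Also $\height(\beta_n)\to\infty$, because ${\height(Q(\alpha_n))\ge \deg Q\,\height(\alpha_n)-O(1)}$. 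Hence $|\theta-\beta_n|\le e^{-w_n\height(\beta_n)}$ with $w_n\asymp v_n\to\infty$.

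The real issue is showing ${\deg\beta_n=m}$ for large~$n$. By Capelli's theorem, ${X^m-a}$ with ${a\in\Q^\times}$ is irreducible unless ${a=b^\ell}$ for some prime ${\ell\mid m}$, or ${4\mid m}$ and ${a=-4b^4}$. I would therefore show that each of the curves ${y^\ell=Q(x)}$ (for primes ${\ell\mid m}$, so ${\ell\ge p}$) and ${-4y^4=Q(x)}$ has only finitely many rational points. For this I would take the irreducible component, compute its genus by Riemann--Hurwitz using the $k$ simple zeros of~$Q$ (these are totally ramified), and apply Faltings' theorem. The choice of~$k$ in~\eqref{enumbersimpleroots} is exactly what makes the genus at least~$2$:
\begin{itemize}
\item ${\ell=2}$ with $5$ simple zeros gives genus ${\ge2}$;
\item ${\ell=3}$ with $4$ simple zeros gives ${2g-2\ge -6+8}$;
\item the quartic case is covered by the ${\ell=2}$ analysis.
\end{itemize}
For ${\ell\ge5}$, only two simple zeros are available, so I need to include ramification at infinity or at the other zeros. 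I expect this genus bookkeeping, especially the case ${p\ge5}$, ${k=2}$, to be the main obstacle; if it is delicate there, I would fall back on Darmon--Granville type finiteness results for superelliptic equations. Since the $\alpha_n$ are distinct (their heights tend to infinity), only finitely many~$n$ are excluded. Hence ${\theta\in U_{m'}}$ for some ${m'\le m}$.

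\textbf{Lower bound.} Suppose, for contradiction, that ${\gamma}$ has degree ${d<m}$ and satisfies ${|\theta-\gamma|\le e^{-w\height(\gamma)}}$ with~$w$ large and $\height(\gamma)$ large. Fix a small ${\eps>0}$ and choose~$n$ maximal with ${\height(\alpha_n)\le \eps w\height(\gamma)}$. By~\eqref{ensparse},
\[
v_n\height(\alpha_n)\ge A^{-1}\height(\alpha_{n+1})> A^{-1}\eps w\height(\gamma).
\]
Then $|\gamma-\beta_n|\le e^{-w\height(\gamma)}+Ce^{-v_n\height(\alpha_n)}\le e^{-c\,\eps w\height(\gamma)/A}$. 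On the other hand, ${\gamma\neq\beta_n}$ because their degrees differ. Liouville's inequality therefore gives
\[
|\gamma-\beta_n|\ge e^{-O_m(\height(\gamma)+\height(\beta_n))}\ge e^{-O(\height(\gamma)+\eps w\height(\gamma))}.
\]
Taking~$\eps$ small and then~$w$ large yields a contradiction. (If~$n$ does not exist because $\eps w\height(\gamma)<\height(\alpha_0)$, the height of~$\gamma$ is bounded for fixed~$w$ and the case is trivial.) Thus~$\theta$ is not in $U_d$ for any ${d<m}$, so ${\theta\in U_m}$.
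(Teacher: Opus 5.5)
Your first half (the approximants $\beta_n$, the height comparison, and degree exactly $m$ via Capelli, Riemann--Hurwitz and Faltings) is the paper's argument. At that point the paper just verifies \eqref{egap} and quotes the gap principle (Theorem~\ref{thgap}, Corollary~\ref{cogap}); you instead try to prove that principle by hand. That is a reasonable plan, but the last step fails as written. Your two displays give $|\gamma-\beta_n|\le e^{-c\eps w\height(\gamma)/A}$. They also give, via $\height(\beta_n)\le\frac{\deg Q}{m}\eps w\height(\gamma)+O(1)$, the bound $|\gamma-\beta_n|\ge e^{-dm\height(\gamma)-d\deg Q\,\eps w\height(\gamma)-O(1)}$. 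Together these yield only
\[
\frac{c}{A}\,\eps w\height(\gamma)\le dm\height(\gamma)+d\deg Q\,\eps w\height(\gamma)+O(1).
\]
The $\eps w\height(\gamma)$ terms on both sides scale together, so shrinking $\eps$ changes nothing. A contradiction would need $c/A>d\deg Q$, and nothing in the definition of $\LL$ makes $A$ small.

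The information you discarded is $v_n\to\infty$, lost when you merged $Ce^{-v_n\height(\alpha_n)}$ into $e^{-c\eps w\height(\gamma)/A}$. Keep the two terms $e^{-w\height(\gamma)}$ and $Ce^{-v_n\height(\alpha_n)}$ separate. Compare each with the Liouville exponent $dm(\height(\gamma)+\height(\beta_n)+\log2)$, where $\height(\beta_n)\le\frac{2\deg Q}{m}\height(\alpha_n)$.
\begin{itemize}
\item Against $w\height(\gamma)$: use $\height(\alpha_n)\le\eps w\height(\gamma)$ with $\eps<1/(4d\deg Q)$, and take $w>4dm$.
\item Against $v_n\height(\alpha_n)$: since $\height(\alpha_{n+1})>\eps w\height(\gamma)\to\infty$ forces $n\to\infty$, you have $v_n\ge8d\deg Q$, which absorbs the $\height(\beta_n)$ term. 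Also $\height(\gamma)<Av_n\height(\alpha_n)/(\eps w)$ absorbs $dm\height(\gamma)$ once $w>4Adm/\eps$.
\end{itemize}
This is precisely the proof of Theorem~\ref{thgap}.

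Second, you leave the genus bound for primes $\ell\ge5$ open, but no Darmon--Granville input is needed. The orders of the zeros of $Q$ add up to $\deg Q$, and the two simple zeros contribute $2\not\equiv0\pmod\ell$. So either a third zero has order prime to $\ell$, or $\ell\nmid\deg Q$ and $\infty$ is a branch point. Either way there are at least three totally ramified points, and Riemann--Hurwitz gives $2g-2\ge3(\ell-1)-2\ell=\ell-3\ge2$. This is the paper's argument in the case $q\nmid k$. Your reduction of $-4y^4=Q(x)$ to $Y^2=-Q(X)$ is fine.
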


Theorem~\ref{throot} asserts that certain algebraic functions of special type (specifically, $m$th roots of polynomials)  take $U_m$-values at $\LL$-numbers. 
 The next theorem shows the same for general algebraic functions of degree~$m$ under some very mild conditions.

Let~$u$ be an algebraic function of degree~$m$. We call the \textit{genus} of~$u$ (notation: $\genus(u)$) the genus of the field $\Q(z,u)$. Let~$M$ be the Galois closure  of $\Q(z,u)$ over $\Q(z)$. The Galois group $\Gal(M/\Q(z))$, realized as a subgroup of the symmetric group $\SSS_m$, will be called  the \textit{Galois group} of~$u$  (notation: $\Gal(u)$). 

\begin{theorem}
\label{thgen}
Let~$u$ be an algebraic function of degree~$m$ such that ${\genus(u)\ge 2}$, and that ${\Gal(u)=\SSS_m}$ or~${\AAA_m}$. 
Then we have ${u(\lambda)\in U_m}$  for any $\LL$-number~$\lambda$ in the domain of definition of~$u$. 
\end{theorem}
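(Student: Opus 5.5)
Two things must be shown: that $u(\lambda)$ is well approximated by algebraic numbers of degree at most $m$, and that it is not a $U_k$-number for $k<m$. Let $P(X,Y)$ be the minimal polynomial of $u$, and let $(\alpha_n)$, $(v_n)$ be the sequences from~\eqref{eliouv} and~\eqref{ensparse}.

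\emph{Upper bound.} Put $\beta_n=u(\alpha_n)$, defined by analytic continuation on a small interval around $\lambda$; for $n$ large, $\alpha_n$ lies in this interval. Then $P(\alpha_n,\beta_n)=0$. Taking heights in $P(x,y)=0$ gives $\height(\beta_n)=\height(\alpha_n)/m+O(\sqrt{\height(\alpha_n)}+1)$ for points on the curve; the error term comes from the height machine, or from a cruder elementary estimate if that suffices. Since $u$ is Lipschitz near $\lambda$, we get $|u(\lambda)-\beta_n|\ll |\lambda-\alpha_n|\le e^{-v_n\height(\alpha_n)}\le e^{-(mv_n/2)\height(\beta_n)}$. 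Also $\beta_n$ has degree at most $m$. So the whole argument reduces to showing that, for all but finitely many rationals $\alpha$ near $\lambda$, the specialized polynomial $P(\alpha,Y)$ is irreducible over $\Q$, so that $\beta_n$ has degree exactly $m$.

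\emph{This step is where the hypotheses enter, and I expect it to be the main obstacle.} Since $\Gal(u)$ is $\SSS_m$ or $\AAA_m$, and hence at least $2$-transitive when $m\ge 4$ (small $m$ handled directly), reducibility of $P(\alpha,Y)$ means that $\alpha$ lifts to a rational point on one of the curves corresponding to the intermediate fields of $M$ fixed by $\SSS_k\times\SSS_{m-k}$ (or its intersection with $\AAA_m$), for some $1\le k<m$. I would show that each such curve has genus at least~$2$. For $k=1$ it is $\Q(z,u)$, of genus $\ge2$ by assumption. For $k\ge2$, the field of $k$-sets of conjugates covers, or dominates via a genus comparison using Riemann--Hurwitz, a curve of genus at least $\genus(u)$; this point needs care. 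Faltings' theorem then leaves only finitely many exceptional $\alpha$. Note that Faltings alone would give only a thin set, and the genus $\ge2$ hypothesis upgrades this to finiteness, which is exactly what is needed because $\alpha_n$ is a specific sequence.

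\emph{Lower bound.} Suppose $u(\lambda)$ is approximated by $\gamma$ of degree $d<m$ with $|u(\lambda)-\gamma|\le e^{-w\height(\gamma)}$ and $w$ large. Since $u$ is locally invertible near $\lambda$ (generically; move slightly away from critical points), $\lambda$ is approximated by a conjugate root $\delta$ of $P(X,\gamma)=0$, and $\height(\delta)\ll\height(\gamma)$. Compare $\delta$ with the nearby $\alpha_n$, choosing $n$ via~\eqref{ensparse} so that $\height(\alpha_n)$ is comparable to $w\height(\gamma)$ up to the factor $Av_n$. Liouville's inequality for $\delta\ne\alpha_n$ then forces $\delta=\alpha_n$ once $w$ is large enough. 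Hence $\gamma$ is a root of $P(\alpha_n,Y)$, which is irreducible of degree $m$ by the previous step, contradicting $d<m$. Thus $u(\lambda)\in U_m$. Transcendence of $u(\lambda)$ follows from this, or directly, since an algebraic value would force $\lambda$ to be algebraic.
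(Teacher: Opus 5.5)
Your architecture is the paper's: specialize $u$ at the $\alpha_n$, and get $\deg u(\alpha)=m$ for all but finitely many rational $\alpha$ from the fields $L_k$ of $k$-subsets of conjugates plus Faltings. However, there is a genuine gap: the step that carries the whole theorem is left unproved. You need $\genus_k\ge 2$ for the curves attached to $L_k$, $2\le k\le m/2$, and the mechanism you suggest does not give it.

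The $k$-set curve does not cover the curve of $u$ over $\Q(z)$. For $1<k<m-1$ the stabilizer of a $k$-set in $\SSS_m$ or $\AAA_m$ fixes no point, so $L_k$ contains no conjugate of $\Q(z,u)$. For $k\ne m/2$ that stabilizer is even maximal, so there is no field strictly between $\Q(z)$ and $L_k$ at all. What remains is a Riemann--Hurwitz comparison of orbit counts of the inertia generators on $k$-subsets versus on points. This is not a term-by-term formality: a $4$-cycle has index $3$ on the $4$ points but index $4$ on the $6$ two-element subsets, i.e.\ proportionally less. So the comparison must use the global branch data. This is exactly the inequality $\genus_1\le\genus_2\le\cdots\le\genus_{\lfloor m/2\rfloor}$ of Guralnick and Shareshian \cite[Lemma~2.0.12]{GS07}, on which the paper's Theorem~\ref{thhimuq} rests. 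Without it, or a proof of it, the irreducibility of $P(\alpha_n,Y)$ is not established. Both your upper bound (degree exactly $m$) and your lower bound (the final contradiction) depend on it.

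Once that input is supplied the rest works, and your lower bound takes a different route from the paper. The paper feeds $\kappa=u(\lambda)$ and the $\beta_n=u(\alpha_n)$ into the gap principle (Theorem~\ref{thgap}, Corollary~\ref{cogap}), verifying~\eqref{egap} from~\eqref{ensparse} and~\eqref{ehcan}. You instead pull a putative approximation $\gamma$ of degree $<m$ back through the local inverse of $u$ to a root $\delta$ of $P(X,\gamma)$. This $\delta$ has degree at most $(m-1)\deg_XP$ and height at most $m\height(\gamma)+O(1)$ by Proposition~\ref{prcompare}. You then redo the gap argument on the rational side. Take $n\ge n_0$ minimal with $v_n\height(\alpha_n)\ge c\,\height(\gamma)$; then \eqref{ensparse} gives $\height(\alpha_n)\ll\height(\gamma)$. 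Liouville's inequality now forces $\delta=\alpha_n$, i.e.\ $\gamma=\beta_n$, which has degree $m$.

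This version is self-contained and never needs the gap condition for the $\beta_n$. It does need $u'(\lambda)\ne 0$, which holds outright because the zeros of $u'$ are algebraic and $\lambda$ is transcendental; no ``moving away from critical points'' is required. The paper's version is shorter given the black box and needs no invertibility.

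One harmless slip: on the curve one has $\height(\beta_n)\approx(\deg_XP/m)\height(\alpha_n)$, not $\height(\alpha_n)/m$. Since only $w_n\to\infty$ matters, the crude bounds of Proposition~\ref{prcompare} suffice.
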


The conditions on~$u$ imposed in this theorem are ``generic'', in the sense that a ``randomly selected'' algebraic function of degree~$m$ is expected to satisfy them. This vague statement can be made mathematically precise in many ways, but we prefer not to go into it here. 

\bigskip


As follows from Proposition~\ref{prbaire}, one needs to replace the set of Liouville numbers by a smaller subset (like $\LL$-numbers) if one works with a real-valued algebraic function. The situation changes if we consider algebraic functions taking non-real values on~$\R$. For instance, it is obvious that the algebraic function ${u(z):=z+i}$ of degree~$2$ takes $U_2$-values at all Liouville numbers, not just at $\LL$-numbers. More generally, upon imposing a mild restriction on a non-real algebraic number~$\gamma$ of degree~$m$, one can show that the function ${u(z):=z+\gamma}$ takes $U_m$-values at all Liouville numbers. 

Let~$\gamma$ be an algebraic number of degree~$m$ and let~$M$ be the Galois closure  of $\Q(\gamma)$ over $\Q$. The Galois group $\Gal(M/\Q)$, realized as a subgroup of the symmetric group $\SSS_m$, will be called  the \textit{Galois group} of~$\gamma$.

\begin{theorem}
\label{thcomplex}
Let~$\gamma$ be a non-real complex algebraic number of degree~$m$ and of Galois group~$\SSS_m$. Then for any Liuoville number~$\lambda$ we have ${\lambda+\gamma\in U_m}$. 
\end{theorem}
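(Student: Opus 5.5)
We must prove two things: that $\xi:=\lambda+\gamma$ is well approximable by algebraic numbers of degree~$m$, and that it is not well approximable by algebraic numbers of degree ${d<m}$. Transcendence of~$\xi$ is immediate, since~$\lambda$ is transcendental. For the first part, take the rational approximations $\alpha_n$ of~$\lambda$ from the Liouville property, so that ${|\lambda-\alpha_n|\le e^{-v_n\height(\alpha_n)}}$ with ${v_n\to\infty}$, and put ${\beta_n:=\alpha_n+\gamma}$. Each $\beta_n$ has degree exactly~$m$, and the standard height inequalities give ${|\height(\beta_n)-\height(\alpha_n)|\le \height(\gamma)+\log 2}$. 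Hence ${\height(\beta_n)\to\infty}$ and ${|\xi-\beta_n|\le e^{-w_n\height(\beta_n)}}$ with ${w_n\sim v_n\to\infty}$. So the approximation property~\eqref{edefum} holds with degree~$m$.

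The main task is to exclude degree ${d<m}$. Suppose there are algebraic $\beta_n$ of degree ${d<m}$ with ${\height(\beta_n)\to\infty}$ and ${|\xi-\beta_n|\le e^{-w_n\height(\beta_n)}}$, ${w_n\to\infty}$. Put ${\delta_n:=\beta_n-\gamma}$, so that ${|\lambda-\delta_n|}$ is tiny. The difficulty is that $\delta_n$ is neither rational nor real. I would use the Galois hypothesis as follows.
\begin{itemize}
\item Since $\lambda$ is real, $\bar\xi=\lambda+\bar\gamma$ is approximated by $\bar\beta_n$ to the same precision.
\item Subtracting, ${|(\gamma-\bar\gamma)-(\beta_n-\bar\beta_n)|\le 2e^{-w_n\height(\beta_n)}}$.
\item The number ${\eta_n:=\beta_n-\bar\beta_n}$ is algebraic of degree at most~$d^2$ and height at most ${2\height(\beta_n)+\log 2}$.
\item The fixed algebraic number ${\gamma-\bar\gamma\ne0}$ is therefore approximated by algebraic numbers of bounded degree with exponent tending to infinity, relative to their heights.
\end{itemize}

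By Liouville's inequality (in the form ${|\theta-\eta|\ge (2H(\theta))^{-D}H(\eta)^{-D}}$ for distinct algebraic numbers of degree $\le D$), this forces ${\eta_n=\gamma-\bar\gamma}$ for all large~$n$. Hence ${\gamma-\bar\gamma\in\Q(\beta_n,\bar\beta_n)}$.

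Now I bring in the Galois group. Write $\bar\gamma=\sigma(\gamma)$ where $\sigma$ is complex conjugation restricted to~$M$, so ${\gamma_1-\gamma_2\in \Q(\beta_n,\bar\beta_n)}$ for two distinct conjugates ${\gamma_1=\gamma}$, ${\gamma_2=\bar\gamma}$. Since the Galois group is $\SSS_m$, which is $2$-transitive, the number ${\gamma_1-\gamma_2}$ has degree ${m(m-1)}$ when ${m\ge3}$: its stabilizer is the pointwise stabilizer of $\{1,2\}$. (For ${m=2}$ the degree is $2=m$, and the argument is easier.) Its field of definition $\Q(\gamma_1,\gamma_2)$ therefore contains $\Q(\gamma)$ and has degree ${m(m-1)}$.

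Next I compare with $\Q(\beta_n)$. We have ${\beta_n=\delta_n+\gamma}$, and more usefully ${\Q(\beta_n,\bar\beta_n)\supseteq\Q(\gamma,\bar\gamma)\ni\gamma}$. The hardest step is to get from here to a contradiction with ${\deg\beta_n<m}$; the bound ${[\Q(\beta_n,\bar\beta_n):\Q]\le d^2}$ alone is not enough. Instead I would argue with the approximation of~$\xi$ directly. Taking the norm or trace over $\Q(\beta_n)$ does not obviously help, so I would try the following. Consider ${\mu_n:=\beta_n-\gamma}$, which approximates $\lambda$ to precision $e^{-w_n\height(\beta_n)}$. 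For every embedding $\tau$ of $\Q(\beta_n,\gamma)$ fixing the relevant data (via the $\SSS_m$-symmetry one can choose $\tau$ with $\tau\beta_n=\beta_n$), we get ${\tau\gamma=\gamma+(\text{something small})}$. The only case where this is not a contradiction is ${\tau\gamma=\gamma}$, by a gap argument similar to the one above. This yields ${\gamma\in\Q(\beta_n)}$, hence ${m\mid d}$, contradicting ${d<m}$.

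Making this last embedding argument rigorous is where I expect the real obstacle. In particular one must control how much $\tau$ can move $\beta_n$ compared with $\gamma$, and $2$-transitivity of~$\SSS_m$ is exactly what should rule out intermediate fields. If that fails, the fallback is to exploit ${\gamma-\bar\gamma\in\Q(\beta_n,\bar\beta_n)}$ together with primitivity of~$\SSS_m$: the field $\Q(\beta_n)\cap M$ must be either~$\Q$ or contain a conjugate of $\Q(\gamma)$. In the first case one should get ${\deg(\gamma-\bar\gamma)\le d^2}$ in a way incompatible with the Galois structure, which again requires care.
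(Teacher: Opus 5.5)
Your reduction matches the paper's. Because $\lambda$ is real, a good approximation $\beta_n$ of $\lambda+\gamma$ makes $\beta_n-\bar\beta_n$ a good approximation of $\gamma-\bar\gamma$; the paper writes this as $|\kappa-\beta|\ge|\im\gamma-\im\beta|$ for a single $\beta$. Liouville's inequality then forces $\beta_n-\bar\beta_n=\gamma-\bar\gamma$.

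The gap comes after that. You claim the bound $[\Q(\beta_n,\bar\beta_n):\Q]\le d^2$ ``alone is not enough''. But together with the containment $\Q(\gamma,\bar\gamma)\subseteq\Q(\beta_n,\bar\beta_n)$ you derived one line earlier, it already finishes the proof:
\[
m(m-1)=[\Q(\gamma,\bar\gamma):\Q]\le[\Q(\beta_n,\bar\beta_n):\Q]\le d^2\le(m-1)^2<m(m-1).
\]
This is a contradiction for every $m\ge2$, and it is exactly the paper's Wilms lemma.

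What you must still justify, and only asserted, is that $\gamma-\bar\gamma$ has degree $m(m-1)$. $2$-transitivity makes all differences $\gamma_j-\gamma_k$ ($j\ne k$) conjugate. You also need them pairwise distinct, i.e. the stabilizer of $\gamma_1-\gamma_2$ is no larger than the pointwise stabilizer of $\{1,2\}$. The paper checks this case by case:
\begin{itemize}
\item $\gamma_1-\gamma_2=\gamma_2-\gamma_3$ puts $\gamma_3$ in $\Q(\gamma_1,\gamma_2)$, which for Galois group $\SSS_m$ forces $m=3$. Then $\gamma_1+\gamma_2+\gamma_3\in\Q$ gives $\gamma_2\in\Q$, a contradiction.
\item $\gamma_1-\gamma_2=\gamma_3-\gamma_4$ forces $m=4$. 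Then $\gamma_1+\gamma_4\in\Q$, which puts $\gamma_4$ in $\Q(\gamma_1)$, impossible for $\SSS_4$.
\item The remaining coincidences are immediate.
\end{itemize}

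The route you propose instead does not work. Take an embedding $\tau$ fixing $\beta_n$ but moving $\gamma$. Nothing makes $\tau(\beta_n-\gamma)$ close to $\lambda$, because embeddings other than the identity and complex conjugation are not continuous for the complex absolute value. So ``$\tau\gamma=\gamma+(\text{something small})$'' has no basis, and there is no gap argument to apply. Complex conjugation is precisely the one non-trivial embedding that respects the approximation, which is why the argument has to go through $\gamma-\bar\gamma$. The fallback via $\Q(\beta_n)\cap M$ is likewise unnecessary.

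As written, the lower-bound half of your proof is incomplete. Replace its last part by the degree count above together with the distinctness check.
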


Note that it is not enough  to assume only that~$\gamma$ is non-real and of degree~$m$, one indeed has to impose some extra condition upon~$\gamma$. For instance, take ${\gamma:=\sqrt2+i}$, of degree~$4$. By Erd\H os~\cite{Er62} (or by Proposition~\ref{prbaire} above), there are uncountably many Liouville numbers~$\lambda$ such that ${\lambda+\sqrt2}$ is a Liouville number as well. For every such~$\lambda$, the number ${\lambda+\gamma}$ is in~$U_2$, not in~$U_4$. 

The proof of Theorem~\ref{thcomplex} extends (with obvious modifications)  to functions of the type ${u(z):= P(z)+\gamma}$, where~$\gamma$ is as in Theorem~\ref{thcomplex}, and ${P(X)\in \Q[X]}$ is a non-constant polynomial with rational coefficients. 
It would be interesting to discover more general classes of (non-real-valued) algebraic functions of degree~$m$ with $U_m$-values at all  Liouville numbers.

\paragraph{Acknowledgments}
We thank  Qing Liu, Tali Monderer, Peter Müller, Danny Neftin, Ricardo Pengo and Robert Wilms for their help in preparing this article.  

\section{Preliminaries}
\label{sprem}

Unless indicated otherwise, in this article:

\begin{itemize}
\item
capital letters $X,Y$ stand for independent variables;

\item
capital letters~$P$ and~$Q$ denotes polynomials;

\item
small letters $u,x,y$ stand for algebraic functions;

\item
small letters $v,w$ stand for positive real numbers (usually, as terms of sequences $(v_n)$ and $(w_n)$); 



\item
small letter~$z$ stands for the complex variable; 

\item
small Greek letters ${\alpha, \beta, \gamma, \xi}$ stand for algebraic numbers;

\item 
small Greek letters ${\kappa, \lambda}$ stand for transcendental numbers. 
\end{itemize}

\subsection{Heights}
We denote by $\height(\cdot)$ the usual absolute logarithmic height on the field of algebraic numbers~$\bar\Q$,   and call it simply \textit{height}. We will use (often without special reference) the standard properties of the height, like
$$
\height(\alpha+\beta)\le \height(\alpha)+\height(\beta)+\log 2, \quad \height(\alpha\beta) \le \height(\alpha)+\height(\beta) \qquad (\alpha,\beta \in \bar\Q)
$$
and so on, 
 as given, for instance, in \cite[Section~2]{BMM26}.  In particular, we will use Liouville's inequality as follows.

\begin{proposition}[Liouville's inequality]
\label{prli}
Let~$\beta$ and~$\beta'$ be distinct complex algebraic numbers of degrees~$d$ and~$d'$, respectively. Then 
$$
|\beta-\beta'| \ge e^{-dd'(\height(\beta)+\height(\beta')+\log2)}. 
$$
\end{proposition}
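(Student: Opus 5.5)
The plan is to reduce to a lower bound for a single nonzero algebraic number, using the difference ${\gamma:=\beta-\beta'}$. Since ${\beta\ne\beta'}$, the number~$\gamma$ is a non-zero algebraic number, so it suffices to prove
$$
|\gamma|\ge e^{-D\height(\gamma)}, \qquad D:=[\Q(\gamma):\Q],
$$
and then to bound~$D$ and $\height(\gamma)$ in terms of the data of~$\beta$ and~$\beta'$.

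First, the degree. The number~$\gamma$ lies in the compositum $\Q(\beta,\beta')$, whose degree over~$\Q$ is at most $[\Q(\beta):\Q]\cdot[\Q(\beta'):\Q]=dd'$. Hence ${D\le dd'}$.

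Second, the height. By the standard properties of the height quoted above, together with ${\height(-\beta')=\height(\beta')}$, we get
$$
\height(\gamma)\le \height(\beta)+\height(\beta')+\log 2.
$$
Combining these two facts, the displayed bound for~$|\gamma|$ gives ${|\gamma|\ge e^{-dd'(\height(\beta)+\height(\beta')+\log 2)}}$, as wanted.

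It remains to prove the displayed bound, and this is the only step with real content. Put ${K=\Q(\gamma)}$, so that ${[K:\Q]=D}$. The complex number~$\gamma$ is the image of the generator under one archimedean embedding of~$K$, and this embedding defines an archimedean place~$v_0$ of~$K$ with ${|\gamma|_{v_0}=|\gamma|}$. The height is invariant under inversion, ${\height(\gamma)=\height(\gamma^{-1})}$, which follows from the product formula. Write
$$
\height(\gamma^{-1})=\frac1D\sum_v d_v\log^+|\gamma^{-1}|_v,
$$
where the sum runs over the places~$v$ of~$K$ and ${d_v\ge1}$ are the local degrees. Every term is non-negative, so dropping all terms except ${v=v_0}$ and using ${d_{v_0}\ge1}$ gives
$$
D\height(\gamma)\ge \log^+|\gamma|^{-1}\ge -\log|\gamma|.
$$
This is exactly ${|\gamma|\ge e^{-D\height(\gamma)}}$.

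The main point to be careful about is the normalization of the local absolute values, so that ${|\gamma|_{v_0}}$ really equals the usual complex absolute value~$|\gamma|$. With the convention ${d_v\in\{1,2\}}$ for archimedean places, a complex place only strengthens the inequality.
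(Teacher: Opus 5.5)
Your argument is correct, and it is essentially the standard product-formula proof behind \cite[Theorem~1.5.21]{BG06}, which the paper cites rather than proves. The only difference is bookkeeping: you package the local estimates into the global bounds $\height(\beta-\beta')\le\height(\beta)+\height(\beta')+\log 2$ and $|\gamma|\ge e^{-[\Q(\gamma):\Q]\height(\gamma)}$, whereas the usual version applies the product formula to $\beta-\beta'$ directly in $\Q(\beta,\beta')$ and uses the triangle inequality place by place.
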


The proof can be found, for instance, in \cite[Theorem~{1.5.21}]{BG06}. 

We also need to compare the heights of two algebraic numbers satisfying an algebraic relation.

\begin{proposition}
\label{prcompare}
Let ${P(X,Y) \in \bar\Q[X,Y]}$ be a polynomial with algebraic coefficients, and let ${\alpha,\beta \in \bar\Q}$ be such that ${P(\alpha, \beta)=0}$. Assume that $P(X,Y)$ is not divisible by the polynomial ${X-\alpha}$ (that is, the polynomial $P(\alpha, Y)$ is not identically~$0$).  Then 
\begin{equation}
\label{ehco}
\height(\beta) \le N\height(\alpha) + O(1), 
\end{equation}
where ${N:=\deg_XP}$ and the implied constant depends only on the polynomial~$P$. 
\end{proposition}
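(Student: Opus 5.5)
We want to bound $\height(\beta)$ by $N\height(\alpha)+O(1)$ when $P(\alpha,\beta)=0$ and $P(\alpha,Y)\not\equiv 0$. The plan is to write $P(\alpha,Y)=\sum_{j=0}^{M} a_j(\alpha)Y^j$, where $M:=\deg_YP$ and each $a_j(X)\in\bar\Q[X]$ has degree at most~$N$. Since $P(\alpha,Y)$ is not identically zero, some coefficient $a_j(\alpha)$ is non-zero. Thus~$\beta$ is a root of the non-zero polynomial $R(Y):=P(\alpha,Y)\in\bar\Q[Y]$. The standard fact we invoke is that a root of a non-zero polynomial is controlled by the height of that polynomial. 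Precisely, if $R=\sum b_jY^j\ne0$ and $R(\beta)=0$, then
$$
\height(\beta)\le \height(b_0:\dots:b_M)+\log 2,
$$
where the right-hand side uses the projective (multiplicative) height of the coefficient vector. This follows place by place. At each place~$v$ one has $|\beta|_v\le c_v\max_j|b_j|_v/\max_{j:b_j\ne0,\,j\ge1}\ldots$, or more cleanly from the Mahler-measure bound $\height(\beta)\le \log M(R)\le \height(\mathbf b)+\log(M+1)$ over a field containing $\alpha$, $\beta$ and the coefficients of~$P$. If $R$ has degree~$0$ it has no root, so $\deg R\ge1$ automatically.

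It then remains to bound the projective height of $(a_0(\alpha):\dots:a_M(\alpha))$. We bound it by the affine height, $\height(\mathbf b)\le \sum_v d_v\log\max(1,\max_j|a_j(\alpha)|_v)$, normalized by the field degree. Each $a_j(\alpha)=\sum_{i\le N}c_{ij}\alpha^i$. At a non-archimedean~$v$ this gives
$$
|a_j(\alpha)|_v\le \max_{i,j}|c_{ij}|_v\,\max(1,|\alpha|_v)^N.
$$
At an archimedean~$v$ the same bound holds up to an extra factor $(N+1)$. Summing over all places gives
$$
\height(\mathbf b)\le N\height(\alpha)+\height_{\mathrm{aff}}(P)+\log(N+1),
$$
where $\height_{\mathrm{aff}}(P)$ denotes the affine height of the coefficient vector of~$P$. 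Combining this with the previous step yields~\eqref{ehco}, with the constant depending only on~$P$ through $N$, $M$ and the coefficients.

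The only point needing care is the first step, which bounds $\height(\beta)$ in terms of the coefficients of $P(\alpha,Y)$ when the leading coefficient $a_M(\alpha)$ may vanish. This is handled by working projectively: we bound $\height(\beta)$ by the height of the vector $(b_0:\dots:b_M)$ rather than dividing by the leading coefficient. The Mahler-measure (or Gelfond) argument applies to $R$ viewed with its true degree, and the projective height of the true coefficient vector is at most that of the full vector, since we merely drop zero entries. Everything else is routine triangle-inequality bookkeeping over places.
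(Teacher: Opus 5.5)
Your argument is correct. The paper gives no proof of its own here, only the reference \cite[Proposition~5.2]{BM06}, and your two-step route is the standard proof of such an estimate: a root of the non-zero polynomial $R=P(\alpha,Y)$ has height at most the projective height of its coefficient vector plus $\log 2$, and that height is at most $N\height(\alpha)+\height_{\mathrm{aff}}(P)+\log(N+1)$.

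The only thing to tidy is your garbled local bound. It should read $\log^+|\beta|_v\le\log\max_j|b_j/b_d|_v$ (plus $\log 2$ at archimedean $v$), where $b_d$ is the leading coefficient of $R$. The terms $\log|b_d|_v$ then disappear by the product formula, which is exactly why a vanishing $a_M(\alpha)$ causes no trouble.
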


The proof can be found, for instance, in \cite[Proposition~5.2]{BM06}\footnote{Attention: in~\cite{BM06} the multiplicative height ${\Height:= \exp\height}$ is used.}. 
Assuming the polynomial $P(X,Y)$ irreducible, much sharper results are known; see, for instance,~\cite{Ha17}. However,  simple estimate~\eqref{ehco} is sufficient for our purposes.

\section{Proof of Theorem~\ref{throot}}

Our principal tool will be the following \textit{gap principle}. 

\begin{theorem}
\label{thgap}
Let~$\kappa$ be a complex transcendental number and~$m$ a positive integer. 
Let $(\beta_n)_{n\ge0}$ be a sequence of algebraic numbers of degree not exceeding~$m$ and  $(w_n)_{n\ge0}$  a sequence  of positive real numbers. Assume that 
\begin{equation}
\label{eapprogabe}
\height(\beta_n) \to \infty, \qquad w_n \to \infty, \qquad |\kappa-\beta_n|\le Ce^{-w_n\height(\beta_n)} \quad (n\ge0)
\end{equation}
with some ${C>0}$. 
Assume further that there exists ${B>0}$ such that 
\begin{equation}
\label{egap}
\height(\beta_{n+1})\le Bw_n \height(\beta_n) \qquad (n \ge 0). 
\end{equation}  
Then there exist positive numbers~$c$ and~$\eta$ such that for every algebraic~$\gamma$  of degree not exceeding~$m$, distinct from  each of~$\beta_n$, we have
\begin{equation}
\label{elowerbebe}
|\kappa-\gamma|\ge ce^{-\eta\height(\gamma)}.
\end{equation}
\end{theorem}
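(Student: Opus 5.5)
The plan is to compare~$\gamma$ with a suitably chosen~$\beta_n$ and apply Liouville's inequality (Proposition~\ref{prli}) to the pair $(\gamma,\beta_n)$, which is legitimate since ${\gamma\ne\beta_n}$ for every~$n$. Since both have degree at most~$m$, we get
$$
|\gamma-\beta_n|\ge e^{-m^2(\height(\gamma)+\height(\beta_n)+\log 2)}.
$$
If in addition ${|\kappa-\beta_n|\le \frac12|\gamma-\beta_n|}$, the triangle inequality gives ${|\kappa-\gamma|\ge \frac12 e^{-m^2(\height(\gamma)+\height(\beta_n)+\log2)}}$. What remains is to choose~$n$ so that $\height(\beta_n)$ is bounded by a constant multiple of $\height(\gamma)$ (plus a constant), while~$\beta_n$ is still close enough to~$\kappa$.

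Discarding finitely many terms, we may assume ${w_n\ge 2m^2+2}$ and ${\height(\beta_n)\ge 1}$ for all~$n$, and that $\height(\beta_n)$ is large. Put ${H:=\height(\gamma)}$. If~$H$ is below a fixed threshold, then only finitely many~$\gamma$ of degree at most~$m$ qualify (Northcott). Since~$\kappa$ is transcendental, each of these has ${|\kappa-\gamma|>0}$, so they are absorbed into the constant~$c$. For large~$H$, let~$n$ be the least index with ${w_n\height(\beta_n) > 2m^2(H+\height(\beta_n)+\log 2)+\log(2C)}$, roughly ${w_n\height(\beta_n)\gtrsim 4m^2 H}$ once ${w_n\ge 4m^2}$. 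Such~$n$ exists because ${w_n\height(\beta_n)\to\infty}$. For this~$n$ we have ${|\kappa-\beta_n|\le Ce^{-w_n\height(\beta_n)}\le \frac12 e^{-m^2(\height(\gamma)+\height(\beta_n)+\log2)}\le\frac12|\gamma-\beta_n|}$, which is the condition needed above.

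The key step, and the main obstacle, is bounding $\height(\beta_n)$ in terms of~$H$. This is where the gap condition~\eqref{egap} enters. If ${n=0}$, then $\height(\beta_0)$ is a constant. Otherwise the minimality of~$n$ gives ${w_{n-1}\height(\beta_{n-1})\le 2m^2(H+\height(\beta_{n-1})+\log2)+\log(2C)}$. Since ${w_{n-1}\ge 4m^2}$, the term $2m^2\height(\beta_{n-1})$ can be absorbed into half of the left-hand side, leaving ${w_{n-1}\height(\beta_{n-1})\le 4m^2H+O(1)}$. Then~\eqref{egap} yields ${\height(\beta_n)\le Bw_{n-1}\height(\beta_{n-1})\le 4Bm^2H+O(1)}$. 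The care needed is in this absorption and in making the threshold choice uniform in~$\gamma$. Substituting back gives ${|\kappa-\gamma|\ge c\,e^{-m^2(1+4Bm^2)H}}$, so we may take ${\eta=m^2(1+4Bm^2)}$, with~$c$ adjusted to cover the finitely many small-height exceptions.
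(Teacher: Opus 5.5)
Your argument is correct, and it is the standard proof of this gap principle: take the least $n$ for which $\beta_n$ is close enough to $\kappa$, apply Liouville's inequality to the pair $(\gamma,\beta_n)$, and use~\eqref{egap} at index $n-1$ together with minimality to get $\height(\beta_n)\le 4Bm^2\height(\gamma)+O(1)$. The paper does not prove this statement itself; it quotes it from \cite{BMM26} (Theorem~3.1) and LeVeque, so there is no in-paper argument to compare against.

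Two cosmetic fixes. First, after discarding finitely many terms you need $w_n\ge 4m^2$, not $2m^2+2$, for the absorption step. Second, the Northcott step is superfluous: the case $n=0$ already gives a bound that is uniform over all $\height(\gamma)\ge 0$.
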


Very informally, if~$\kappa$ has  a ``not too sparse'' set of very good approximations by algebraic numbers of bounded degree, then it cannot be well approximated by the algebraic numbers of bounded degree not belonging to this set.

In this form this theorem can be found as  Theorem~3.1 in~\cite{BMM26}, but results of this flavor appeared much earlier; see, for instance, Theorem~4 in LeVeque~\cite{Le53}. 

\begin{remark}
\label{reheinf}
In~\cite{BMM26}, instead of ${\height(\beta_n)\to \infty}$  it is assumed that ${\beta_n\to \kappa}$ (in the complex topology).  Actually, the two  conditions are equivalent in this context.   Indeed, if the sequence of algebraic numbers  $(\beta_n)$ converges to a transcendental number, then it may not have infinitely many equal terms; by Northcott's theorem, this implies that ${\height(\beta_n)\to \infty}$. Conversely, if ${\height(\beta_n)\to \infty}$ then the  two conditions ${w_n\to +\infty}$ and ${|\kappa-\beta_n|\le Ce^{-w_n\height(\beta_n)}}$ imply that ${\beta_n\to \kappa}$. 
\end{remark}

We are going to use the following immediate consequence.

\begin{corollary}
\label{cogap}
In the set-up of Theorem~\ref{thgap}, assume that all the numbers~$\beta_n$, with finitely many exceptions, are of degree exactly~$m$. Then~$\kappa$ is a $U_m$-number. 
\end{corollary}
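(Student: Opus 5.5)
We have to show that $\kappa\in U_m$. By definition this means two things: $\kappa$ is well approximable by algebraic numbers of degree exactly~$m$, and it is not so approximable by algebraic numbers of any smaller degree. The plan is to get the first from hypothesis~\eqref{eapprogabe} and the second from the gap principle, Theorem~\ref{thgap}.

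\emph{Approximation of degree~$m$.} Discard the finitely many $\beta_n$ whose degree is less than~$m$. The remaining subsequence still satisfies ${\height(\beta_n)\to\infty}$ and ${w_n\to\infty}$, and every term has degree exactly~$m$. The constant~$C$ in~\eqref{eapprogabe} has to be absorbed into the exponent. Since ${\height(\beta_n)\to\infty}$, for $n$ large we have ${Ce^{-w_n\height(\beta_n)}\le e^{-(w_n/2)\height(\beta_n)}}$ as soon as ${w_n\ge 2}$ and ${\height(\beta_n)\ge 2|\log C|}$. Set ${w_n':=w_n/2}$. Then ${w_n'\to\infty}$, and the tail of the subsequence satisfies~\eqref{edefum}. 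Hence $m$ has the defining approximation property. We only need $\kappa$ transcendental here, and that is assumed.

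\emph{Minimality of~$m$.} Suppose, for contradiction, that for some ${m'<m}$ there are algebraic numbers $\gamma_j$ of degree~$m'$ and reals ${w_j'\to\infty}$ with ${\height(\gamma_j)\to\infty}$ and ${|\kappa-\gamma_j|\le e^{-w_j'\height(\gamma_j)}}$. Each $\gamma_j$ has degree ${m'\le m}$. It also differs from every $\beta_n$ with $n$ large, because those $\beta_n$ have degree exactly~$m$. To apply Theorem~\ref{thgap} cleanly, we first apply it to the tail sequence from the previous step, with the original $C$, $B$ and~$w_n$. Removing finitely many initial terms preserves~\eqref{egap}, since that condition only compares consecutive terms. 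After the removal, all $\beta_n$ have degree~$m$, so every $\gamma_j$ is distinct from all of them. Theorem~\ref{thgap} then gives ${|\kappa-\gamma_j|\ge ce^{-\eta\height(\gamma_j)}}$. Combining the two bounds yields ${(w_j'-\eta)\height(\gamma_j)\le \log(1/c)}$. This is impossible, because ${w_j'\to\infty}$ and ${\height(\gamma_j)\to\infty}$. Therefore no smaller $m'$ works, and $\kappa\in U_m$.

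The only step that needs any care is the bookkeeping. We must check that passing to the tail keeps all the hypotheses of Theorem~\ref{thgap}, namely~\eqref{egap} and~\eqref{eapprogabe}. We must also check that the distinctness condition holds for every $\gamma$ of degree less than~$m$, and this is automatic by comparing degrees. Everything else is immediate.
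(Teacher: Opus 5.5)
Your argument is correct and is exactly what the paper has in mind when it calls this an immediate consequence of Theorem~\ref{thgap}. The degree-$m$ tail of $(\beta_n)$, with $C$ absorbed into the exponent, gives the approximation property, and any approximating sequence of lower degree would contradict the lower bound~\eqref{elowerbebe}; your bookkeeping (the tail still satisfies~\eqref{egap}, and distinctness follows by comparing degrees) is handled correctly.
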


We will also be using  the following well-known irreducibility criterion for  binomials; see,  for instance, Theorem~9.1  in~\cite[Chapter~VI]{La02}. 

\begin{proposition}
\label{prlang}
Let~$K$ be a field, ${\alpha\in K^\times}$ and~$m$  a positive integer. Assume 
that for all primes ${p\mid m}$ we have ${\alpha\notin K^p}$. If ${4\mid m}$ then we assume, in addition, that ${\alpha\notin -4K^4}$. Then the polynomial ${X^m - \alpha}$ is irreducible in $K[X]$. 
\end{proposition}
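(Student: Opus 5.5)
The plan follows the standard argument (as in Lang): reduce to the case where $m$ is a prime power, then assemble the prime-power cases by a degree count.

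\emph{Step 1: $m=p$ prime.} Let $b$ be a root of $X^p-\alpha$ in an algebraic closure. Suppose $X^p-\alpha$ has a monic factor in $K[X]$ of degree $d$ with $0<d<p$. The roots of $X^p-\alpha$ have the form $\zeta b$ with $\zeta^p=1$. Hence the constant term of this factor is $\pm\zeta' b^d$ for some $p$th root of unity $\zeta'$. So $c:=\zeta' b^d\in K$, and therefore $c^p=\alpha^d$. Since $\gcd(d,p)=1$, we can write $1=rd+sp$ and get $\alpha=\alpha^{rd}\alpha^{sp}=(c^r\alpha^s)^p\in K^p$. This is a contradiction. (If $\operatorname{char}K=p$, then $X^p-\alpha=(X-b)^p$, and the same constant-term argument works.)

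\emph{Step 2: $m=p^r$.} We induct on $r$. Put $c:=b^{p^{r-1}}$; it is a root of $X^p-\alpha$, so $[K(c):K]=p$ by Step 1. We will apply the induction hypothesis to $X^{p^{r-1}}-c$ over $L:=K(c)$. For this we must show that $c\notin L^p$, and also that $c\notin -4L^4$ when $4\mid p^{r-1}$. The key identity is $N_{L/K}(c)=(-1)^{p+1}\alpha$.
\begin{itemize}
\item If $p$ is odd, then $c=e^p$ gives $\alpha=N(e)^p\in K^p$, a contradiction.
\item If $p=2$, then $N(c)=-\alpha$. So $c=e^2$ gives $-\alpha=N(e)^2$, and we must rule this out using $\alpha\notin-4K^4$. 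Write $e=x+yc$, so that $e^2=x^2+\alpha y^2+2xyc$. The equation $e^2=c$ forces $x^2+\alpha y^2=0$ and $2xy=1$. Hence $\alpha=-x^2/y^2=-4x^4$, contradicting $\alpha\notin -4K^4$. In characteristic~$2$, where $2xy=1$ is impossible, we argue separately.
\item We must also exclude $c\in-4L^4$. This gives $-c=4f^4$, so $N(-c)=\alpha\cdot(\text{sign})=16N(f)^4$, which makes $\pm\alpha$ a square. We then argue as above.
\end{itemize}
Then $[K(b):K]=[K(b):L]\,[L:K]=p^{r-1}\cdot p=p^r$, which gives irreducibility.

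\emph{Step 3: general $m=\prod p_i^{r_i}$.} For each $i$, the element $b^{m/p_i^{r_i}}$ is a root of $X^{p_i^{r_i}}-\alpha$. By Step 2 this polynomial is irreducible: its hypotheses are inherited, since $p_i\mid m$, and $4\mid p_i^{r_i}$ implies $4\mid m$. So $p_i^{r_i}$ divides $[K(b):K]$. These prime powers are pairwise coprime, so $m\mid [K(b):K]\le m$, and $X^m-\alpha$ is irreducible.

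\emph{Main obstacle.} I expect the main obstacle to be the $p=2$ bookkeeping in Step 2. Specifically, I must verify that the conditions ``$\alpha\notin K^2$ and $\alpha\notin-4K^4$'' over $K$ imply the same pair of conditions for $c$ over $L=K(\sqrt\alpha)$. This is what allows the induction to pass beyond $r=2$. To do this, I would compute norms of elements $x+y\sqrt\alpha$ explicitly.
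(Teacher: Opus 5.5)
Your proof is correct. It is essentially the standard argument in Lang's \emph{Algebra}, Chapter~VI, Theorem~9.1; the paper gives no proof of its own and simply cites that theorem.

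Your prime case (constant term of a factor) and prime-power case match Lang's. The prime-power step uses the tower $K\subset K(c)\subset K(b)$, with a norm argument for odd $p$ and a computation in the basis $1,c$ for $p=2$; that computation is where $\alpha\notin -4K^4$ enters. The one structural difference is the passage to general $m$. Lang reduces to prime powers by a further induction through a tower, which needs another norm computation to show that a root of the smaller binomial is not a $p$th power in the intermediate field. Your Step~3 instead observes that $p_i^{r_i}$ divides $[K(b):K]$ because $K(b^{m/p_i^{r_i}})\subseteq K(b)$. This is shorter and uses nothing beyond the prime-power case.

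One spot needs tightening: the third bullet of Step~2, together with your plan to ``compute norms''. The norm relation $-\alpha=16N(f)^4$ gives no contradiction. For instance, $K=\Q$ and $\alpha=-16$ satisfy all the hypotheses (so $X^8+16$ is irreducible over $\Q$), yet $-\alpha=16\cdot 1^4$. The case $c\in -4L^4$ must be excluded inside $L$, in one of two ways:
\begin{itemize}
\item From $-c=(2f^2)^2$, write $2f^2=x+yc$. Then $x^2+\alpha y^2=0$ and $2xy=-1$, hence $\alpha=-4x^4$, a contradiction.
\item More quickly, apply the automorphism $c\mapsto -c$ of $L/K$ to get $c\in L^2$, which your second bullet already excludes.
\end{itemize}
In characteristic~$2$ there is nothing to prove, since $-4L^4=\{0\}$. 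Similarly, the norm remark in your second bullet plays no role; the coordinate computation is what does the work.
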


Finally, we are going to use the celebrated theorem of Faltings~\cite{Fa83}.

\begin{theorem}[Faltings]
\label{thfa}
Let ${P(X,Y)\in \Q[X,Y]}$ be a $\Q$-irreducible polynomial defining an algebraic curve ${P(X,Y)=0}$ of genus ${\genus\ge 2}$. Then the equation ${P(x,y)=0}$ has at most finitely many solutions  ${(x,y)\in \Q^2}$. 
\end{theorem}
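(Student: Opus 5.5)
This is Faltings' theorem (the former Mordell conjecture), which the paper only cites. I would not reprove it. The plan is to reduce the stated affine, possibly singular formulation to the standard one in~\cite{Fa83}: a smooth projective geometrically irreducible curve of genus at least~$2$ over a number field has finitely many rational points. The genus here means the genus of the function field $\Q(x,y)$, where ${P(x,y)=0}$, as in the definition of $\genus(u)$.

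\emph{Step 1: the case where $P$ is reducible over $\bar\Q$.} Since~$P$ is irreducible in $\Q[X,Y]$, its $\bar\Q$-irreducible factors are permuted transitively by $\Gal(\bar\Q/\Q)$. Suppose there are at least two of them, say $P_1$ and $P_2={}^\sigma P_1$. A rational solution $(x,y)$ of one factor is a solution of all its conjugates. So every rational solution lies on ${P_1=P_2=0}$. The factors are non-associate and coprime, so this set is finite by B\'ezout, and we are done.

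\emph{Step 2: the absolutely irreducible case.} Now the affine curve ${\mathcal C_0: P=0}$ is geometrically irreducible, and its function field over~$\Q$ is $\Q(x,y)$, of genus $\genus\ge2$. Let~$\mathcal C$ be the normalization of the projective closure of~$\mathcal C_0$. It is a smooth projective geometrically irreducible curve over~$\Q$ with the same function field, hence of genus~$\genus$. The normalization map ${\pi:\mathcal C\to\overline{\mathcal C_0}}$ is an isomorphism over the smooth locus of~$\mathcal C_0$, and~$\mathcal C_0$ has only finitely many singular points. Consider a rational solution ${(x,y)\in\Q^2}$ that is a smooth point of~$\mathcal C_0$. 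Its unique preimage under~$\pi$ is fixed by Galois, so it lies in $\mathcal C(\Q)$. This gives an injection from the smooth rational solutions into $\mathcal C(\Q)$, which is finite by Faltings' theorem. Adding the finitely many singular points proves the claim.

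The only genuine content is the deep theorem of~\cite{Fa83} itself, which I would simply invoke. Within the reduction, the one point needing care is that the genus is taken for the function field, i.e.\ geometric genus, not arithmetic genus. That is why I pass to the normalization rather than the plane model, and why Step~1 is needed: if~$P$ is not geometrically irreducible, the function field $\Q(x,y)$ has a nontrivial constant field, and the genus hypothesis is not directly usable.
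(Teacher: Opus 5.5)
Your proposal is correct and matches the paper, which gives no proof and simply cites \cite{Fa83}. The reduction you add is the standard one the paper leaves implicit: dispose of the geometrically reducible case by B\'ezout, then pass to the smooth projective model by normalization, reading the genus as that of the function field $\Q(x,y)$.
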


(Here we deviate from the convention made in the beginning of Section~\ref{sprem}.)

\bigskip

For the proof of  Theorem~\ref{throot}, the following observation is crucial. 

\begin{proposition}
\label{prdegm}
In the set-up of Theorem~\ref{throot},  for all but finitely many ${\alpha\in \Q}$, the $m$th root $Q(\alpha)^{1/m}$ is of degree~$m$ over~$\Q$; in other words, the polynomial ${X^m-Q(\alpha)}$ is $\Q$-irreducible. 
\end{proposition}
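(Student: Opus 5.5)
We use Proposition~\ref{prlang} with $K=\Q$ and ${\alpha}$ replaced by $Q(\alpha)$. Apart from the finitely many zeros of~$Q$, we have $Q(\alpha)\ne 0$. Hence $X^m-Q(\alpha)$ can fail to be irreducible only if $Q(\alpha)=y^\ell$ with $y\in\Q$ for some prime $\ell\mid m$, or, when $4\mid m$, if $Q(\alpha)=-4y^4$ with $y\in\Q$. It therefore suffices to show that each of the finitely many equations
$$
Y^\ell=Q(X)\quad (\ell \text{ prime},\ \ell\mid m), \qquad -4Y^4=Q(X)\quad (\text{if } 4\mid m)
$$
has only finitely many rational solutions $(x,y)\in\Q^2$. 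This will follow from Faltings' Theorem~\ref{thfa} once we know that each of these curves is $\Q$-irreducible of genus at least~$2$. Note that every prime $\ell\mid m$ satisfies $\ell\ge p$, and the second equation occurs only when $p=2$.

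The key step is the genus computation. First consider the superelliptic curve $Y^\ell=Q(X)$ with $\ell$ prime. Write $Q=c\prod(X-\xi_j)^{e_j}$ over~$\bar\Q$. Let $s$ be the number of roots with $\ell\nmid e_j$; the simple roots belong to this set, so $s\ge k$. Since $Q$ is not an $\ell$th power in $\bar\Q[X]$, the polynomial $Y^\ell-Q(X)$ is absolutely irreducible (Proposition~\ref{prlang} over $\bar\Q(X)$, or Capelli directly). The cover $X$ of $\mathbb P^1$ has degree~$\ell$. It is totally ramified exactly over the $s$ roots with $\ell\nmid e_j$, and also over $\infty$ when $\ell\nmid\deg Q$. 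Riemann--Hurwitz then gives
$$
2\genus-2=-2\ell+(\ell-1)s', \qquad s'\in\{s,s+1\},\ s'\ge s.
$$
Thus $\genus=(\ell-1)(s'-2)/2$. For $\ell\ge5$ and $s'\ge 2$ we get $s'-2\ge1$ only when $s'\ge3$, so the case $s'=2$ needs care. If $s'=2$, then $s=2$ and $\ell\mid\deg Q$. But the sum of the $e_j$ over roots with $\ell\nmid e_j$ would then be $\equiv 0\pmod \ell$, whereas the two simple roots contribute~$2$; this is a contradiction for $\ell\ge3$. Hence $s'\ge3$ and $\genus\ge(\ell-1)/2\ge2$. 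For $\ell=3$, we have $s'\ge4$, so $\genus\ge2$. For $\ell=2$, we have $s'\ge5$, and since $s'$ must be even for a double cover, in fact $s'\ge6$, giving $\genus\ge2$. When $p=3$, only primes $\ell\ge3$ occur; when $p\ge5$, only primes $\ell\ge5$ occur. So the hypothesis on~$k$ is exactly what these cases require.

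For $-4Y^4=Q(X)$, I would reduce to the $\ell=2$ case instead of computing its genus. Any rational point $(x,y)$ on it gives the rational point $(x,2y^2)$ on $Z^2=-Q(X)$. Since $-Q$ has the same simple roots as~$Q$, the computation above shows that this hyperelliptic curve is irreducible of genus $\ge2$. By Faltings it has finitely many rational points, hence only finitely many $x$ occur. The same reduction handles everything, because we only need finiteness of the $\alpha$-values. The step I expect to need the most care is the parity/ramification-at-infinity bookkeeping in Riemann--Hurwitz, in particular the $s'=2$ case for $\ell\ge5$ just treated. I also need to confirm that the genus over $\bar\Q$ equals the genus of $\Q(z,u)$ used in Theorem~\ref{thfa}; this holds because the curve is absolutely irreducible.
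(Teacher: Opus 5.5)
Your proof is correct and follows the paper's route: Lang's binomial criterion reduces the statement to finiteness of rational points on $Y^\ell=Q(X)$ for primes $\ell\mid m$ (and on $-4Y^4=Q(X)$ when $4\mid m$). Riemann--Hurwitz, using the simple zeros of $Q$, gives genus at least $2$ (your congruence argument for the case $s'=2$ is the paper's ``one extra branch point when $q\nmid k$'' observation), and Faltings finishes. The only real deviation is that you handle $-4Y^4=Q(X)$ by sending $(x,y)\mapsto(x,2y^2)$ to the hyperelliptic curve $Z^2=-Q(X)$, rather than computing the genus of the degree-$4$ cover directly, which is a slightly cleaner shortcut.
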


\begin{proof}

Let~$q$ be a prime divisor of~$m$. Then ${q\ge p}$. (Recall that we denote by~$p$ the smallest prime divisor of~$m$.) Let~$\genus$ be the genus of the algebraic curve ${Y^q=Q(X)}$.  We want to show that ${\genus \ge 2}$. 

We apply the Riemann-Hurwitz formula to the branched cover defined by ${(X,Y)\mapsto X}$. It is of degree~$q$, and each simple zero of~$Q$ is a branch point with ramification index~$q$.  By the hypothesis, we have at least~$k$ such simple zeros, where~$k$ is defined in~\eqref{enumbersimpleroots}.  

We consider separately the cases ${q\nmid k}$ and ${q\mid k}$. 
When ${q\nmid k}$, we have one of the following: either ${q\mid \deg Q}$, in which case~$Q$ has one more zero of order not divisible by~$q$, that  also serves as a branch point with ramification index~$q$; or, ${q\nmid \deg Q}$, in which case~$\infty$ is a branch point with ramification index~$q$. Thus, we have at least ${k+1}$ branch points with ramification index~$q$. 
The Riemann-Hurwitz formula implies that
${2\genus -2 \ge (k+1)(q-1)-2q}$. It follows that
$$
\genus \ge \frac{(k-1)(q-1)}{2} \ge \frac{(k-1)(p-1)}{2}, 
$$
and we find  that ${\genus \ge 2}$  in all cases.  

If  ${q\mid k}$ then  ${p=2}$ and ${q=k=5}$, in which case we have at least~$5$ branch points with ramification index~$5$, and the  Riemann-Hurwitz formula implies that ${\genus \ge 6}$.  

We have just proved that  the curve ${Y^q=Q(X)}$ is of genus at least~$2$ for every ${q\mid m}$. 

Also, if ${4\mid m}$, then ${p=2}$ and ${k=5}$. A similar application of the Riemann-Hurwitz formula implies that  the curve ${-4Y^4=Q(X)}$ is of genus at least~$2$ (in fact, even at least~$5$). 

By Theorem~\ref{thfa}, for every prime ${q\mid m}$ there may exist at most finitely many ${\alpha\in \Q}$ such that ${Q(\alpha) \in \Q^q}$. Also, when ${4\mid m}$, there may exists at most finitely many ${\alpha\in \Q}$ such that ${Q(\alpha) \in -4\Q^4}$. Using Proposition~\ref{prlang}, we complete the proof. 
\end{proof}

Now we are ready to prove Theorem~\ref{throot}. Let~$\lambda$ be an $\LL$-number, and let ${\kappa:= \Q(\lambda)^{1/m}}$ be some definition of the $m$th root of $\Q(\lambda)$.  Set 
$$
\eps:=\frac12\min\{|\lambda-\alpha|: Q(\alpha)=0\}, 
$$ 
that is, half of the smallest distance between~$\lambda$ and a zero of~$Q$. Since~$\lambda$ is transcendental, we have ${Q(\lambda) \ne 0}$, which implies that ${\eps>0}$.  Denote 
$$
\OO_\eps:= \{z\in \C: |z-\lambda|<\eps\}
$$
the $\eps$-neighborhood of~$\lambda$, and let $u(z)$ be the algebraic function on $\OO_\eps$ satisfying 
$$
u(z)^m=Q(z), \qquad u(\lambda)=\kappa. 
$$
Our definition of~$\eps$ implies that the derivative $u'(z)$ is bounded: there exists ${C>0}$, depending on~$Q$ and~$\lambda$, such that 
\begin{equation}
\label{ederiv}
|u'(z)|\le C \qquad (z\in \OO_\eps). 
\end{equation}
Proposition~\ref{prdegm} implies that for all ${\alpha \in \Q\cap\OO_\eps}$ with finitely many exceptions, $u(\alpha)$ is an algebraic number of degree~$m$. 

Let $(\alpha_n)$, $(v_n)$ and~$A$ be as in the beginning of Subsection~\ref{ssll}, so that conditions~\eqref{eliouv} and~\eqref{ensparse} are satisfied. By discarding finitely many~$n$, we may assume that ${\alpha_n\in  \OO_\eps}$ for all~$n$, and that for all~$n$ the algebraic number ${\beta_n:=u(\alpha_n)}$ is of degree~$m$. 

Applying  Proposition~\ref{prcompare} to the equation ${\beta_n^m=Q(\alpha_n)}$, we obtain, for each~$n$, the inequalities 
$$
\height(\beta_n) \le N\height(\alpha_n) +O(1), \qquad \height(\alpha_n) \le m\height(\beta_n)+O(1), 
$$
where ${N:=\deg Q(X)}$ and the implied constants depend on~$Q$ and~$m$. In particular, ${\height(\beta_n) \to \infty}$.
By discarding finitely many~$n$, we may assume that 
\begin{equation}
\label{ehcan}
\height(\beta_n) \le 2N\height(\alpha_n) , \qquad \height(\alpha_n) \le 2m\height(\beta_n)
\end{equation}
for all~$n$.  Using~\eqref{ederiv}, we obtain
$$
|\kappa-\beta_n|=|u(\lambda) - u(\alpha_n)|\le C|\lambda-\alpha_n| \le Ce^{-v_n\height(\alpha_n)} \le  Ce^{-(v_n/2N)\height(\beta_n)}. 
$$
This proves~\eqref{eapprogabe} with ${w_n:=v_n/2N}$. Furthermore, using~\eqref{ensparse}, we obtain 
$$
\height(\beta_{n+1}) \le 2N\height(\alpha_{n+1}) \le 2NAv_n \height(\alpha_n) \le 8N^2m Aw_n\height(\beta_n), 
$$
which proves~\eqref{egap} with ${B=:8N^2mA}$. Thus, Theorem~\ref{thgap} applies, and so does Corollary~\ref{cogap}, because all the~$\beta_n$ are of degree~$m$. Theorem~\ref{throot} is proved.   \qed

\section{Hilbert-Müller irreducibility theorem over~$\Q$}
\label{shimu}

This section is independent of the other parts of the article. The contents of this section emerged from discussions with Tali Monderer, Peter Müller and Danny Neftin. We thank them most warmly for their generous help. 

\bigskip

Let ${P(X,Y)\in \Q[X,Y]}$ be a $\Q$-irreducible polynomial.  Classical \textit{Hilbert's irreducibility theorem} asserts that, for infinitely many ${\xi \in \Z}$, the specialized polynomial ${P(\xi,Y)\in \Q[Y]}$ is $\Q$-irreducible. However, there can still  be infinitely many ${\xi\in \Z}$ for which $P(\xi, Y)$ is $\Q$-reducible, as the example ${P(X,Y):=Y^2-X}$ shows.

Müller~\cite{Mu99} showed that, imposing on~$P$ extra conditions, one would obtain a much stronger statement: $P(\xi,Y)$ is $\Q$-irreducible for all ${\xi \in \Z}$ with finitely many exceptions.  

Let us introduce some notation. We set 
${m:=\deg_YP}$, and we denote by ${G=\Gal_Y(P)}$ the Galois group of $P(x,Y)$ over the field $\Q(x)$ of rational functions, realized as a subgroup of the symmetric group~$\SSS_m$.  Finally, we denote by~$\genus$ the genus of the algebraic curve  ${P(X,Y)=0}$. 

The following is Theorem~1.2 from~\cite{Mu99}. 

\begin{theorem}[Müller]
Assume that ${G=\SSS_m}$ or that~$m$ is a prime number. Assume also that ${\genus\ge1}$. Then $P(\xi,Y)$ is $\Q$-irreducible for all but finitely many ${\xi \in \Z}$. 
\end{theorem}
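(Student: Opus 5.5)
The plan is to follow Müller's strategy: combine Hilbert's irreducibility theorem in its ``reducible specializations come from rational points on finitely many covers'' form with Siegel's theorem on integral points. Let $x$ be transcendental over $\Q$ and $y$ a root of $P(x,Y)$, with $L$ the splitting field over $\Q(x)$ and $G=\Gal(L/\Q(x))\le\SSS_m$ acting on the roots $y_1,\dots,y_m$. Take $\xi\in\Z$ outside the finitely many values where the leading coefficient or discriminant vanishes. If $P(\xi,Y)$ is reducible, then its decomposition group $D_\xi\le G$ is intransitive, so $D_\xi$ is contained in some maximal intransitive subgroup $H=G\cap(\SSS_j\times\SSS_{m-j})$ with $1\le j\le m/2$, up to conjugacy. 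The standard specialization argument then gives a rational point above $\xi$ on the curve $C_H$ corresponding to the fixed field $L^H$. Concretely, $P(\xi,Y)$ then has a factor of degree $j$ with rational coefficients, so the coefficients of the corresponding generic degree-$j$ factor (an $L^H$-rational function) take rational values. Hence the reducible $\xi$ lie in the union, over the finitely many $j$ and conjugacy classes of $H$, of the images $\varphi_H(C_H(\Q))\cap\Z$, where $\varphi_H:C_H\to\mathbb{P}^1$ is the natural map.

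The next step is to show that each such $C_H$ has only finitely many points mapping to integers. By Siegel's theorem this holds unless $C_H$ has genus~$0$ and $\varphi_H^{-1}(\infty)$ has at most two points. I would therefore compute the genus of $C_H$ via Riemann--Hurwitz, using the inertia groups of $L/\Q(x)$ acting on $j$-subsets of $\{1,\dots,m\}$. When $m$ is prime, $G$ is transitive of prime degree. When $G=\SSS_m$, $C_H$ is the curve parametrizing $j$-element subsets of the fiber. In both cases the hypothesis $\genus\ge1$ (the genus of $C_{G_1}$, the point-stabilizer curve) should force $\genus(C_H)\ge1$. The mechanism is that the fiber of $C_H$ over a point, as a $G$-set of $j$-subsets, has at least as many branch contributions relative to its degree $\binom{m}{j}$ as the degree-$m$ action. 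A combinatorial index comparison, of the form $\operatorname{ind}_{j\text{-sets}}(\sigma)\ge \tfrac{\binom mj}{m}\operatorname{ind}(\sigma)$ or similar, would show that $2\genus(C_H)-2 \ge$ a positive multiple of $2\genus-2$ plus a correction, hence $\genus(C_H)\ge1$. For prime $m$, I would use the classification of transitive groups of prime degree (Burnside: either solvable affine $\le AGL_1(p)$ or doubly transitive) to carry out the same comparison.

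I expect this genus comparison to be the main obstacle, since it is a genuinely group-theoretic estimate on fixed points of permutations acting on $j$-subsets. Here I would rely on Müller's counting argument: for $\sigma$ with cycle type $(c_1,\dots)$, bound the number of $\sigma$-orbits on $j$-subsets. Failing a uniform bound, I would treat $j=1$ directly (giving $C_H=C$ itself, of genus $\ge1$), and for $j\ge2$ observe that $C_H$ dominates or relates to $C$ through the correspondence $\{j\text{-sets containing } i\}$, so $\genus(C_H)=0$ would force, via a correspondence argument (Castelnuovo-type), strong restrictions on ramification, contradicting $\genus\ge1$. Once every $C_H$ has positive genus, Siegel's theorem gives finitely many integral $\xi$ on each, and a finite union finishes the proof.
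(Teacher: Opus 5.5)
\paragraph{Verdict.}
There is a genuine gap: the genus comparison $\genus(C_H)\ge1$, on which your whole argument rests, is not proved, and the mechanism you suggest for it is false.

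\paragraph{Where the gap is.}
The paper does not prove this statement. It quotes it from \cite{Mu99} and says only that Müller combines Siegel's theorem with non-trivial group theory. Your framework is the right one: reduce reducible specializations to rational points on the curves $C_H$ attached to $j$-set stabilizers, then apply a finiteness theorem. This is exactly the pattern of the paper's own analogue, Theorem~\ref{thhimuq}, with Faltings in place of Siegel. But the step that carries all the content is the missing one.

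Your pointwise bound $\operatorname{ind}_j(\sigma)\ge\binom{m}{j}\operatorname{ind}_1(\sigma)/m$ fails for $m=6$, $j=2$, $\sigma=(12)(34)(56)$. This $\sigma$ has $9$ orbits on the $15$ two-element subsets, so $\operatorname{ind}_2(\sigma)=6$, whereas the bound requires $7.5$. A $4$-cycle in $\SSS_4$ with $j=2$ also fails.

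No bound of this per-element kind can work. Take four fixed-point-free involutions in $\SSS_6$. Then $\sum\operatorname{ind}_1=12=2m$, so numerically this is a genus-$1$ datum. Yet $\sum\operatorname{ind}_2=24<30=2\binom{6}{2}$, which would force $\genus(C_H)<0$. Such lists are excluded only because the inertia generators have product $1$ and generate the monodromy group. Your Castelnuovo-type fallback does not supply a lower bound for $\genus(C_H)$ either, since $C_H$ and $C$ are linked only through the incidence curve.

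\paragraph{What is needed.}
A global inequality. Let $\bar G\trianglelefteq G$ be the geometric monodromy group and $\sigma_P$ the inertia generators.
\begin{itemize}
\item Scott's lemma gives $\sum_P\operatorname{codim}V^{\sigma_P}\ge 2\dim V$ for every $\C\bar G$-module $V$ with $V^{\bar G}=0$.
\item Riemann--Hurwitz gives $2\genus(C_H)-2=-2\dim W+\sum_P\operatorname{codim}W^{\sigma_P}$, where $W=\C[\bar G/(\bar G\cap H)]$. This expression is additive in $W$.
\item Hence $\genus(C_H)\ge\genus$ whenever $W\cong\C[\Omega]\oplus V$ with $V^{\bar G}=0$.
\end{itemize}

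For $G=\SSS_m$, this splitting holds by Young's rule: the module on $j$-sets is $\bigoplus_{i\le j}S^{(m-i,i)}$, restricted to $\AAA_m$ if $\bar G=\AAA_m$. This is precisely the monotonicity \eqref{egs} from \cite{GS07} that the paper uses.

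For $m$ prime, use Burnside's dichotomy for $\bar G$:
\begin{itemize}
\item If $\bar G\le\mathrm{AGL}_1(m)$, every intransitive subgroup has order prime to $m$ and so fixes a point. Then $C_H$ covers $C$, giving $\genus(C_H)\ge\genus$ directly.
\item If $\bar G$ is doubly transitive, then $\C[\Omega]=1\oplus\psi$ with $\psi$ irreducible. The map $A\mapsto\sum_{a\in A}a$ sends the permutation module on the orbit of $A$ onto $\C[\Omega]$, which gives the required splitting.
\end{itemize}
Curves $C_H$ whose function field has constant field larger than $\Q$ have only finitely many rational points and can be discarded.

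With this ingredient in place your Siegel argument closes; without it the proof is incomplete.
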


We need an analogous result extending to ${\xi \in \Q}$. In this section we prove the following. 

\begin{theorem}
\label{thhimuq}
Assume that ${G=\SSS_m}$, or that ${G=\AAA_m}$ and ${m\ge 3}$.   Assume also that ${\genus\ge2}$. Then $P(\xi,Y)$ is $\Q$-irreducible for all but finitely many ${\xi \in \Q}$. 
\end{theorem}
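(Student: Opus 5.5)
The plan is a Hilbert-irreducibility argument via rational points on quotient curves, combined with Faltings' theorem (Theorem~\ref{thfa}) and a Riemann--Hurwitz genus comparison.

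\textbf{Reduction to quotient curves.} Let $Z\to\mathbb{P}^1$ be the smooth projective curve attached to the Galois closure $M$ of $\Q(x,y)/\Q(x)$, where $P(x,y)=0$. Then $G=\Gal(M/\Q(x))$ acts on the $m$ roots of $P(x,Y)$. Discard the finitely many $\xi\in\Q$ that are branch points, poles, or zeros of the leading coefficient or discriminant of $P$ in $Y$. For every other $\xi$, the specialized polynomial $P(\xi,Y)$ has Galois group equal to a decomposition group $D\le G$, well defined up to conjugacy. Its orbits on the roots correspond to the $\Q$-irreducible factors. If $P(\xi,Y)$ is reducible, then $D$ is intransitive, so $D\le H_j:=G\cap(\SSS_j\times\SSS_{m-j})$ for some $1\le j\le m/2$. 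Standard decomposition-group theory then shows that $\xi$ lifts to a $\Q$-rational point of the quotient curve $X_j:=Z/H_j$, which is the curve parametrizing $j$-subsets of roots.

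For $G=\SSS_m$ or $\AAA_m$ with $m\ge3$, the group $G$ is transitive on $j$-sets, so $X_j$ is geometrically irreducible. I must also check that $\Q$ is algebraically closed in $M$ (or deal with a constant-field extension). If $M$ contains constants $K\ne\Q$, then the geometric monodromy group is a proper normal subgroup of $G$, which is $\AAA_m$ inside $\SSS_m$. The same argument still applies, with $X_j$ defined over $\Q$ as the curve attached to the fixed field of $H_j$, which has full constant field $\Q$ by transitivity. By Theorem~\ref{thfa}, it therefore suffices to show that $\genus(X_j)\ge2$ for every $j$.

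\textbf{Genus comparison.} The curve $X_1$ is the curve $P=0$ itself, so $\genus(X_1)=\genus\ge2$. For general $j$, write $C=\binom mj$ and apply Riemann--Hurwitz to the degree-$C$ cover $X_j\to\mathbb{P}^1$:
$$
2\genus(X_j)-2=-2C+\sum_{\sigma}\bigl(C-\mathrm{orb}_j(\sigma)\bigr).
$$
Here $\sigma$ runs over the inertia generators above the branch points, and $\mathrm{orb}_j(\sigma)$ is the number of orbits of $\langle\sigma\rangle$ on $j$-sets. The key combinatorial claim is
$$
\frac{\mathrm{orb}_j(\sigma)}{C}\le\frac{\mathrm{orb}_1(\sigma)}{m}\qquad\text{for every }\sigma\in\SSS_m.
$$
Granting it, we get
$$
2\genus(X_j)-2\ge\frac Cm\sum_\sigma\bigl(m-\mathrm{orb}_1(\sigma)\bigr)-2C=\frac Cm\,(2\genus-2),
$$
and this is $\ge 2\genus-2$ because $\genus\ge1$ and $C\ge m$. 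Hence $\genus(X_j)\ge\genus\ge2$.

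\textbf{Main obstacle.} The step I expect to be hardest is the orbit inequality. I would first prove it by an averaging argument. Each $\langle\sigma\rangle$-orbit on $j$-sets has length at least the smallest cycle length of $\sigma$ meeting a member set. Summing $1/|\text{orbit}|$ over the $j$-sets gives $\mathrm{orb}_j(\sigma)$, and this sum should be dominated by the average of $1/(\text{cycle length})$ over the points, which equals $\mathrm{orb}_1(\sigma)/m$. Concretely, I would attach to each $j$-set $S$ the quantity $\frac1j\sum_{i\in S}1/\ell(i)$, where $\ell(i)$ is the length of the cycle through $i$, and show that this quantity is at least $1/|\text{orbit of }S|$. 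This holds because the orbit length of $S$ is at least $\max_{i\in S}\ell(i)$, which in turn is at least every $\ell(i)$ with $i\in S$. Averaging over all $S$ then yields the inequality.
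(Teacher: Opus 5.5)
\textbf{There is a genuine gap in the genus comparison.} Your reduction is the same as the paper's route through Propositions~\ref{plangred} and~\ref{pred}: specialize at unramified $\xi$, put the decomposition group inside the stabilizer $H_j$ of a $j$-set with $j\le m/2$, lift $\xi$ to a $\Q$-point of $X_j$, and apply Faltings. The trouble is the genus step, which is the only non-formal step, and your key combinatorial claim $\mathrm{orb}_j(\sigma)/\binom mj\le\mathrm{orb}_1(\sigma)/m$ is false.

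Take $m=4$, $j=2$, $\sigma=(1\,2\,3\,4)$. The six $2$-sets split into the two $\langle\sigma\rangle$-orbits $\{\{1,2\},\{2,3\},\{3,4\},\{4,1\}\}$ and $\{\{1,3\},\{2,4\}\}$. So $\mathrm{orb}_2(\sigma)/6=1/3>1/4=\mathrm{orb}_1(\sigma)/4$. More generally, by Burnside every $m$-cycle in $\SSS_m$ violates the claim whenever $\gcd(m,j)>1$. The faulty step in your averaging is ``the orbit length of $S$ is at least $\max_{i\in S}\ell(i)$'' (and likewise the version with the smallest cycle length). Any union of whole cycles is fixed by $\sigma$, and $\{1,3\}$ has orbit length $2$ under a $4$-cycle.

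This is not just a defect of the local argument; your intermediate global inequality is false too. Take a cover of $\mathbb{P}^1_{\C}$ with branch cycles $(1\,2\,3\,4)$, $(1\,2\,3\,4)^{-1}$, $(1\,2\,4\,3)$, $(1\,2\,4\,3)^{-1}$; their product is $1$ and they generate $\SSS_4$. Riemann--Hurwitz gives $\genus(X_1)=\genus(X_2)=3$. Hence $2\genus(X_2)-2=4<6=\tfrac Cm\bigl(2\genus(X_1)-2\bigr)$.

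What is true, and all that is needed, is the monotonicity $\genus(X_j)\ge\genus(X_1)$ for $j\le m/2$. This is inequality~\eqref{egs}, which the paper quotes from Guralnick--Shareshian. It is a global statement and cannot be obtained by comparing ramification contributions one branch point at a time.

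\textbf{A self-contained fix via characters.} Let $\bar G$ be the geometric monodromy group and $V=H^0(Z_{\C},\Omega^1)$. For $H\le\bar G$ one has $\genus(Z/H)=\dim V^H=\langle\mathrm{Ind}_H^{\bar G}\mathbf 1,\chi_V\rangle$. So whenever $\bar G$ is transitive on $j$-sets, $\genus(X_j)-\genus(X_1)=\langle(\pi_j-\pi_1)|_{\bar G},\chi_V\rangle$, where $\pi_j$ is the permutation character of $\SSS_m$ on $j$-sets. By Young's rule, $\pi_j-\pi_1=\sum_{i=2}^{j}\chi^{(m-i,i)}$ is a genuine character for $j\le m/2$. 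Its restriction to $\bar G$ is therefore genuine, and the inner product is a non-negative integer.

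One minor correction to your constant-field paragraph: for $m=4$ the geometric monodromy group can also be the Klein group $V_4$, normal in $\SSS_4$ and $\AAA_4$, not only $\AAA_m$. Then $X_2$ has a cubic constant field. This case is harmless, because such a smooth curve has no $\Q$-points.

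With this in place of your orbit inequality, the rest of your argument goes through.
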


Müller's argument uses Siegel's theorem about finitude of integral points on a curve of positive genus, together with some non-trivial group theory. In the argument below we use, as one may expect, Faltings' theorem (see Theorem~\ref{thfa}), and very little group theory; the only group-theoretic result involved is inequality~\eqref{egs} below, due to Guralnick  and Shareshian~\cite{GS07}.

\bigskip

We start from some preparations. Fix an algebraic closure of the field $\Q(x)$, and let ${u_1, \ldots, u_m}$ be the roots of $P(x,Y)$ (viewed as a polynomial in~$Y$ over $\Q(x)$)   in this algebraic closure.  Then $\Q(x,u_1, \ldots, u_m)$ is the splitting field of~$P$ over $\Q(x)$, and 
${G= \Gal \bigl(\Q(x,u_1, \ldots, u_m)/\Q(x)\bigr)}$. 

The group~$G$ acts by permutations on the set ${\{u_1, \ldots, u_m\}}$. Let~$A$ be a subset of ${\{u_1, \ldots, u_m\}}$, and~$G_A$ the subgroup of~$G$ stabilizing~$A$ as a set. The field 
${L_A:= \Q(x,u_1, \ldots, u_m)^{G_A}}$
is generated over $\Q(x)$ by the elementary symmetric functions of the elements of~$A$. For instance, if ${A=\{u_1,u_3,u_4\}}$, then 
$$
L_A=\Q(x,u_1+u_3+u_4,u_1u_3+u_1u_4+u_3u_4,u_1u_3u_4). 
$$
For every such~$A$ we fix a generator~$y_A$ of~$L_A$ over~$\Q(x)$, so that ${L_A=\Q(x,y_A)}$. Let ${P_A(X,Y)\in \Q[X,Y]}$ be the $\Q$-irreducible polynomial such that  
$$
P_A(x,y_A)=0; 
$$
it is well-defined up to a constant factor.


The standard reduction argument (see, for instance, Chapter~9 in~\cite{La83}) implies the following.

\begin{proposition}
\label{plangred}
Let ${1\le k\le m-1}$. Then for all but finitely many ${\xi \in \Q}$  the following conditions are equivalent. 

\begin{enumerate}
\item
The polynomial ${P(\xi, Y)}$ has a factor in $\Q[Y]$ of degree~$k$. 

\item
\label{ihasaroot}
For some $k$-element set ${A\subset\{u_1, \ldots, u_m\}}$, the polynomial ${P_A(\xi, Y)}$ has a root in~$\Q$. 

\end{enumerate}
\end{proposition}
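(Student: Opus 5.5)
We prove Proposition~\ref{plangred} by specializing $x\mapsto\xi$ in the algebraic relations among $u_1,\dots,u_m$ and the generators $y_A$. We discard a finite \emph{exceptional set} of $\xi$, chosen so that every specialization below is well defined and faithful.

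\textbf{Setting up the exceptional set.} Write $P(X,Y)=a_0(X)Y^m+\dots+a_m(X)$. For each subset $A$ with $|A|=k$, let $\sigma_{A,1},\dots,\sigma_{A,k}$ be the elementary symmetric functions of the elements of~$A$; they lie in $L_A=\Q(x,y_A)$. Hence there are polynomials $R_{A,j}(X,Y)\in\Q[X,Y]$ and $D_A(X)\in\Q[X]\setminus\{0\}$ with
$$
\sigma_{A,j}=R_{A,j}(x,y_A)/D_A(x).
$$
Conversely, $y_A$ is a polynomial over $\Q(x)$ in the $\sigma_{A,j}$, say $y_A=S_A(x,\sigma_{A,1},\dots,\sigma_{A,k})$, whose coefficients have denominators dividing some $E_A(X)\ne0$. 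We declare $\xi$ exceptional if it is a zero of any of the following: $a_0$, the discriminant of $P$ with respect to~$Y$, the leading coefficient of $P_A$ in~$Y$, the discriminant of $P_A$ in~$Y$, $D_A$, or $E_A$. This is a finite set.

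\textbf{Puiseux-series specialization.} For non-exceptional $\xi$, the $u_i$ are holomorphic near~$\xi$. Fix a small disc around~$\xi$ and branches $u_i(z)$ there; then $u_1(\xi),\dots,u_m(\xi)$ are exactly the $m$ distinct roots of $P(\xi,Y)$. Likewise $y_A(z)$ is holomorphic near~$\xi$, and its value $y_A(\xi)$ is a root of $P_A(\xi,Y)$. Because the relations above are identities between functions in the field, they survive evaluation at~$\xi$:
$$
\sigma_{A,j}(\xi)=R_{A,j}(\xi,y_A(\xi))/D_A(\xi), \qquad y_A(\xi)=S_A(\xi,\sigma_{A,\cdot}(\xi)).
$$
Moreover, as $A$ runs over all $k$-subsets, the conjugates of $y_A$ over $\Q(x)$ are the $y_{gA}$ for $g\in G$. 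Consequently, the roots of $P_A(\xi,Y)$ are exactly the values $y_{B}(\xi)$ for $B$ in the $G$-orbit of~$A$. These values are distinct, since the discriminant of $P_A$ does not vanish at~$\xi$.

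\textbf{(1)$\Rightarrow$(2).} Suppose $F\in\Q[Y]$ is a factor of $P(\xi,Y)$ of degree~$k$. Its roots are $u_i(\xi)$ for $i$ in some set~$A$ with $|A|=k$. Then the coefficients $\sigma_{A,j}(\xi)$ of $F$, up to the leading coefficient, are rational. The relation $y_A(\xi)=S_A(\xi,\sigma_{A,\cdot}(\xi))$ then shows $y_A(\xi)\in\Q$, and this is a rational root of $P_A(\xi,Y)$.

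\textbf{(2)$\Rightarrow$(1).} Suppose $P_A(\xi,Y)$ has a rational root. By the description of its roots, this root equals $y_B(\xi)$ for some $B$ in the orbit of~$A$, and $|B|=k$. Then each $\sigma_{B,j}(\xi)=R_{B,j}(\xi,y_B(\xi))/D_B(\xi)$ is rational. Therefore $\prod_{i\in B}(Y-u_i(\xi))$ is a degree-$k$ factor of $P(\xi,Y)$ defined over~$\Q$.

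The main obstacle is rigorously justifying that the roots of $P_A(\xi,Y)$ are exactly the specialized conjugates $y_B(\xi)$. This requires the product identity $\prod_{B}(Y-y_B)=c(x)^{-1}P_A(x,Y)$ over the orbit, with $c$ the leading coefficient of $P_A$, so that evaluating it at non-exceptional $\xi$ is legitimate. This is routine once the exceptional set is fixed as above.
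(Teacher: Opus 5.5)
Your argument is the standard reduction that the paper invokes by citing Lang (Chapter~9) without giving a proof of its own, so your route is essentially the paper's, and it is correct in substance. One claim needs repair, though the fix is immediate.

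\textbf{The slip.} In the paper's set-up a generator $y_A$ is fixed arbitrarily for each $A$. The conjugates of $y_A$ over $\Q(x)$ are the $g(y_A)$, $g\in G$. Each $g(y_A)$ is \emph{a} generator of $L_{gA}$, but it need not equal the chosen $y_{gA}$. So in (2)$\Rightarrow$(1) the rational root of $P_A(\xi,Y)$ is $g(y_A)(\xi)$ for some $g$, not necessarily $y_B(\xi)$, and you cannot then invoke $R_{B,j}$ and $D_B$.

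\textbf{The fix.} Apply $g$ to $\sigma_{A,j}=R_{A,j}(x,y_A)/D_A(x)$ to get $\sigma_{gA,j}=R_{A,j}(x,g(y_A))/D_A(x)$. Likewise $g(y_A)=S_A(x,\sigma_{gA,\cdot})$, which shows that $g(y_A)$ is holomorphic at $\xi$. Hence $\sigma_{gA,j}(\xi)\in\Q$ with the same exceptional set. Alternatively, choose $y_{gA}:=g(y_A)$ within each orbit. This is well defined because $y_A$ is $G_A$-invariant, and it is exactly the freedom the paper notes right after the proposition.

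Two minor points. The non-vanishing of the discriminant of $P_A$ is never used. You are implicitly fixing a $\Q(x)$-embedding of the splitting field into the field of meromorphic germs at $\xi$, which is what gives $u_i(\xi)$ a meaning.
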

If~$\tily_A$ is a different generator of~$L_A$ and $\tilP_A(X,Y)$ the corresponding polynomial, then for all but finitely many ${\xi \in \Q}$, the polynomial
 ${P_A(\xi, Y)}$ has a root in~$\Q$ if and only if so does $\tilP_A(X,Y)$. 
This  implies that we are free to choose~$y_A$ in the most convenient way. 

If $A,B$ are two subsets as above and ${B=\sigma(A)}$ for some ${\sigma \in G}$, then we may take ${y_B=\sigma(y_A)}$, and the polynomials~$P_A$ and~$P_B$ are the same. Hence, selecting a representative in every orbit of the $G$-action on the  subsets of ${\{u_1, \ldots, u_m\}}$, the set~$A$ in property~\ref{ihasaroot} of Proposition~\ref{plangred} may be chosen among the selected representatives. 

When ${G=\SSS_m}$ or ${G=\AAA_m}$ and ${m\ge 4}$, the action is $k$-transitive for every~$k$. Hence we may take as the representatives the sets 
${A_k:=\{u_1, \ldots, u_k\}}$.  
To simplify notation, we write $L_k$ and $P_k(X,Y)$ instead of $L_{A_k}$ and $P_{A_k}(X,Y)$. Selecting ${y_A=u_1}$ for ${A=A_1=\{u_1\}}$, we may assume that ${P_1=P}$.

We obtain the following consequence of Proposition~\ref{plangred}. 

\begin{proposition}
\label{pred}
Assume that ${G=\SSS_m}$ or that ${G=\AAA_m}$ and ${m\ge 4}$. Then for all but finitely many ${\xi \in \Q}$, 
\begin{equation*}
\text{${P(\xi, Y)}$ has a $\Q$-factor of degree~$k$}\ \Leftrightarrow\ \text{${P_k(\xi, Y)}$ has a root in~$\Q$}. 
\end{equation*}
In particular  (and also in the case ${m=3}$ and ${G=\AAA_3}$), we have
\begin{equation}
\label{eqred}
\text{${P(\xi, Y)}$ is $\Q$-reducible}\ \Leftrightarrow\ \text{${P_k(\xi, Y)}$ has a root in~$\Q$ for some $k\le m/2$}. 
\end{equation} 
\end{proposition}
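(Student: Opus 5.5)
The plan is to deduce Proposition~\ref{pred} from Proposition~\ref{plangred} by exploiting the transitivity of~$G$ on $k$-element subsets of ${\{u_1,\ldots,u_m\}}$.

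\textbf{The first equivalence.} Fix~$k$ with ${1\le k\le m-1}$. By Proposition~\ref{plangred}, for all but finitely many ${\xi\in\Q}$, the polynomial $P(\xi,Y)$ has a $\Q$-factor of degree~$k$ if and only if $P_A(\xi,Y)$ has a rational root for some $k$-element~$A$.

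When ${G=\SSS_m}$, or ${G=\AAA_m}$ with ${m\ge4}$, the group~$G$ acts transitively on $k$-element subsets. For $\AAA_m$ this holds because $\AAA_m$ is $(m-2)$-transitive. For ${k\le m-2}$ this gives transitivity on $k$-sets directly. For ${k=m-1}$ one passes to complements, which are $1$-element sets.

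So every such~$A$ has the form ${\sigma(A_k)}$ with ${\sigma\in G}$. As remarked above, we may then take ${y_A=\sigma(y_{A_k})}$, so that ${P_A=P_k}$. The alternative-generator remark shows that changing generators only alters finitely many~$\xi$. Since there are finitely many sets~$A$, excluding the union of the finitely many exceptional~$\xi$ (over all~$A$ and all~$k$) gives the first equivalence.

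\textbf{Reduction to ${k\le m/2}$.} Next observe that $P(\xi,Y)$ is $\Q$-reducible if and only if it has a $\Q$-factor of some degree~$k$ with ${1\le k\le m/2}$. One direction is trivial. For the other, a factorization into degree~$k$ and degree~${m-k}$ pieces (both ${\ge1}$) gives a factor of degree ${\min(k,m-k)\le m/2}$.

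Two cases need separate attention. First, a nonconstant rational factor of $P(\xi,Y)$ must divide it in $\Q[Y]$. Second, if the content in~$X$ vanishes at~$\xi$, $P(\xi,Y)$ could drop degree or vanish. Both issues concern only the finitely many roots of the leading coefficient of $P$ in~$Y$, which we discard. Combining this with the first equivalence for each ${k\le m/2}$ yields~\eqref{eqred}.

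\textbf{The case ${m=3}$, ${G=\AAA_3}$.} Here ${k\le m/2}$ forces ${k=1}$. $\AAA_3$ is transitive on the three roots, so all $1$-element sets lie in one orbit and ${P_{A_1}=P_1=P}$. Proposition~\ref{plangred} with ${k=1}$ then gives~\eqref{eqred} directly, and there is no need for $2$-set transitivity.

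The only genuinely delicate point is checking the transitivity of $\AAA_m$ on $k$-sets for ${m\ge4}$ and all ${k\le m-1}$, which the complement argument above handles. Everything else is finite bookkeeping of exceptional~$\xi$.
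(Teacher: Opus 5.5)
Your proposal is correct and follows the paper's route. You combine Proposition~\ref{plangred} with transitivity of $G$ on $k$-element subsets, which lets you take $y_{\sigma(A_k)}=\sigma(y_{A_k})$ and hence $P_A=P_k$; you then note that reducibility is detected by a factor of degree at most $m/2$, and handle $m=3$ via $P_1=P$. Your complement argument for $k=m-1$ is slightly more accurate than the paper's phrase that $\AAA_m$ is ``$k$-transitive for every $k$'', since what is actually needed is transitivity on $k$-subsets rather than $k$-transitivity.
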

Note that in the case ${m=3}$ and ${G=\AAA_3}$ equivalence~\eqref{eqred} holds trivially, because ${P_1=P}$.

\bigskip

Denote by $\genus_k$ the genus of the field~$L_k$; it is also the genus of the curve ${P_k(X,Y)=0}$. In particular, ${\genus_1=\genus}$, the genus of ${P(X,Y)=0}$.  According to \cite[Lemma~2.0.12]{GS07}\footnote{In~\cite{GS07} the authors use the language of Riemann surfaces and work over~$\C$, so their result applies to the subfields of  $\C(x,u_1, \ldots, u_m)$ rather than $\Q(x,u_1, \ldots, u_m)$. However, since the Galois group is~$\SSS_m$ or~$\AAA_m$, the constant subfield of the field $L_{k}$  is~$\Q$ for every~$k$, and changing the base field from~$\Q$ to~$\C$ does not affect the genus.}, when ${G=\SSS_m}$ or~$\AAA_m$, we have the inequality
\begin{equation}
\label{egs}
\genus_1 \le \genus_2\le \cdots \le \genus_{\lfloor m/2\rfloor}. 
\end{equation}
Now we are ready to prove Theorem~\ref{thhimuq}.

\begin{proof}[Proof of Theorem~\ref{thhimuq}]
Since ${\genus_1=\genus\ge2}$ by the hypothesis, we have ${\genus_k\ge 2}$ for ${1\le k\le m/2}$. By Faltings' theorem, for every such~$k$ there may exist at most finitely many ${\xi\in \Q}$ such that $P_k(\xi,Y)$ has a root in~$\Q$. Now the result follows by Proposition~\ref{pred}. 
\end{proof}

\begin{remark}
Monderer and Neftin \cite[Theorem~1.1]{MN22} obtain, for~$m$ exceeding some absolute constant\footnote{As indicated in~\cite[Remark~2.8]{MN22}, a suitable value for the ``absolute constant'' is $6\cdot 10^{10}$ (or ${3.5\cdot 10^6}$ if one is ready to accept the classification of the finite simple groups).}, the following much stronger result: if ${G=\SSS_m}$ or~$\AAA_m$ (and without any hypothesis about the genus), for all but finitely many ${\xi\in \Q}$ the Galois group of the polynomial $P(\xi,Y)$ is one of
$$
\SSS_m,\ \AAA_m,\ \SSS_{m-1},\ \AAA_{m-1},\ \SSS_{m-2},\ \AAA_{m-2}\times \SSS_2,\ \SSS_{m-2}\times \SSS_2. 
$$
In the set-up of Theorem~\ref{thhimuq}, this implies that,  for all but finitely many ${\xi\in \Q}$, the Galois group of $P(\xi,Y)$ is~$\SSS_m$ or~$\AAA_m$: indeed, in this case ${\genus_2\ge\genus_1\ge 2}$ by~\eqref{egs}, and the other groups from the list above may occur only for finitely many~$\xi$ by Faltings' theorem and Proposition~\ref{pred}. The only advantage of our Theorem~\ref{thhimuq} compared to the result of Monderer and Neftin is that we do not impose any restriction on~$m$. 
\end{remark}

\section{Proof of Theorem~\ref{thgen}}

The proof goes along the same lines as that of Theorem~\ref{throot}, but instead of Proposition~\ref{prdegm} we use Theorem~\ref{thhimuq}. 

Let~$u$ be an algebraic function satisfying the hypothesis  of Theorem~\ref{thgen}. Then there exists a $\Q$-irreducible polynomial ${P(X,Y) \in \Q[X,Y]}$, satisfying the hypothesis of Theorem~\ref{thhimuq} and such that ${P(z,u(z))=0}$ for all~$z$ in the domain of definition of~$u$. In particular, ${\deg_YP=m}$. 

(The statement of Theorem~\ref{thgen} does not explicitly mention that ${m\ge 3}$ if ${G=\AAA_m}$, as required in the hypothesis of Theorem~\ref{thhimuq}. However, one cannot have ${m=1}$ because ${\genus >0}$, and one cannot have ${m=2}$ because~$\AAA_2$ cannot be the Galois group of an algebraic function of degree~$2$.)

Let~$\lambda$ be an $\LL$-number in the domain of definition of~$u$. We want to prove that ${\kappa:=u(\lambda)}$ is a $U_m$-number. Let ${\eps>0}$ be such that~$u$ is defined in the $\eps$-neighborhood
${\OO_\eps:= \{z\in \C: |z-\lambda|<\eps\}}$ 
and has  bounded derivative therein. 
Theorem~\ref{thhimuq} implies that for all but finitely many ${\alpha \in \Q\cap\OO_\eps}$, the algebraic number $u(\alpha)$ is of degree~$m$.

Let $(\alpha_n)$ and $(v_n)$ be as in the proof of Theorem~\ref{throot}. By discarding finitely many~$n$, we may assume that every~$\alpha_n$ belongs to~$\OO_\eps$, every ${\beta_n:=u(\alpha_n)}$ is of degree~$m$, and inequalities~\eqref{ehcan} hold for all~$n$, where ${N:=\deg_XP}$. Now we complete the proof exactly as we did for Theorem~\ref{throot}. \qed

\section{Proof of Theorem~\ref{thcomplex}}

The proof relies on a nice lemma that we learned from Robert Wilms (personal communication).

\begin{lemma}[Wilms]
\label{lwilms} 
Let~$\gamma$ be a non-real complex algebraic number of degree~$m$ and Galois group~$\SSS_m$. Then ${\im \gamma \ne \im \beta}$ for any complex algebraic~$\beta$ of degree strictly smaller than~$m$.   
\end{lemma}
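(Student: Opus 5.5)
The plan is a degree comparison. Write $\gamma=\gamma_1,\gamma_2,\ldots,\gamma_m$ for the conjugates of~$\gamma$. Since~$\gamma$ is non-real, its complex conjugate $\bar\gamma$ is another conjugate; say $\bar\gamma=\gamma_2$. Then ${2i\im\gamma=\gamma_1-\gamma_2}$.

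Now assume that ${\im\beta=\im\gamma}$ for some algebraic~$\beta$ of degree ${k<m}$. Then ${\im\beta\ne 0}$, so~$\beta$ is non-real. Hence $\bar\beta$ is a conjugate of~$\beta$ distinct from~$\beta$, and
$$
\gamma_1-\gamma_2=2i\im\gamma=2i\im\beta=\beta-\bar\beta\in\Q(\beta,\bar\beta).
$$
The field $\Q(\beta,\bar\beta)$ is generated by two distinct roots of the minimal polynomial of~$\beta$, so its degree over~$\Q$ is at most ${k(k-1)\le (m-1)(m-2)}$. The contradiction will therefore come from showing that ${\delta:=\gamma_1-\gamma_2}$ has degree exactly ${m(m-1)}$ over~$\Q$, which is strictly larger than ${(m-1)(m-2)}$.

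For the degree of~$\delta$, the Galois group ${\SSS_m}$ acts transitively on ordered pairs of distinct indices. So the conjugates of~$\delta$ are the numbers ${\gamma_a-\gamma_b}$ with ${a\ne b}$, and it suffices to show that these $m(m-1)$ numbers are pairwise distinct. This is the main step. I would handle it through the linear relations among the roots. Consider the $\Q$-linear map ${\Q^m\to\bar\Q}$ sending ${(c_1,\ldots,c_m)}$ to ${\sum c_j\gamma_j}$. Its kernel is an ${\SSS_m}$-submodule of the permutation module~$\Q^m$, and this module decomposes as the trivial line $\Q(1,\ldots,1)$ plus the irreducible standard module (sum-zero vectors).

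So the kernel is one of $0$, the trivial line, the standard module, or all of~$\Q^m$. It is not the standard module or all of~$\Q^m$, because ${\gamma_1-\gamma_2\ne0}$ and that vector is sum-zero. Hence the kernel is contained in the trivial line. Now suppose ${\gamma_1-\gamma_2=\gamma_a-\gamma_b}$ with ${(a,b)\ne(1,2)}$. Then ${e_1-e_2-e_a+e_b}$ is a nonzero vector in the kernel, but it is not a multiple of $(1,\ldots,1)$: it has coordinate sum zero and, since ${m\ge2}$ and ${(a,b)\ne(1,2)}$, it is nonzero. This is a contradiction, so the conjugates of~$\delta$ are distinct.

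Hence ${[\Q(\delta):\Q]=m(m-1)>(m-1)(m-2)\ge[\Q(\beta,\bar\beta):\Q]}$, contradicting ${\delta\in\Q(\beta,\bar\beta)}$. The case ${m=2}$ is covered by the same argument, and ${m=1}$ cannot occur because~$\gamma$ is non-real. The only points to check carefully are the irreducibility of the standard representation over~$\Q$, which is classical for~$\SSS_m$, and the bound ${[\Q(\beta,\bar\beta):\Q]\le k(k-1)}$.
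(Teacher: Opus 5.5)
Your proof is correct, and its skeleton is the paper's: show that $\delta=\gamma-\bar\gamma$ has degree $m(m-1)$, using double transitivity together with pairwise distinctness of the differences $\gamma_j-\gamma_k$, then compare with the degree of $\beta-\bar\beta$. The paper uses the cruder bound $(m-1)^2$, which is valid for arbitrary $\beta,\beta'$ of degree $<m$, and so obtains the slightly more general statement $\gamma-\gamma'\ne\beta-\beta'$.

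The genuine difference is in the key step, the distinctness of the differences. The paper does it by case analysis:
\begin{itemize}
\item if a coincidence $\gamma_1-\gamma_2=\gamma_j-\gamma_k$ shares an index, it forces a relation like $\gamma_3=2\gamma_2-\gamma_1\in\Q(\gamma_1,\gamma_2)$; this is impossible for $\SSS_m$ unless $m=3$, and there the rationality of the trace finishes the argument;
\item if the four indices are distinct, it forces $m=4$ and then $\gamma_1+\gamma_4\in\Q$.
\end{itemize}
You instead observe that the $\Q$-linear relations among the conjugates form a submodule of the permutation module. Irreducibility of the standard module then leaves only multiples of the trace relation, and $e_1-e_2-e_a+e_b$ is not one of them.

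What your route buys: it is uniform, and it needs only that the permutation module is trivial $\oplus$ irreducible, which holds for every doubly transitive group. Double transitivity is also all you use to identify the conjugates of $\delta$. So your argument actually proves the lemma whenever the Galois group of $\gamma$ is doubly transitive. The paper's case analysis uses more of $\SSS_m$, namely that pointwise stabilizers of two or three roots still move the remaining ones. That fails, for example, for $\mathrm{AGL}_1(\mathbb{F}_p)$, where the two-point stabilizer is trivial. In exchange, the paper's argument is completely elementary.

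One point you should state explicitly: your list of submodules ($0$, the trivial line, the standard module, $\Q^m$) relies on the two summands being non-isomorphic. This holds for $m\ge 2$.
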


\begin{proof}
We will prove a more general statement: let~$\gamma$ be algebraic of degree~$m$ and Galois group~$\SSS_m$, and ${\gamma'\ne \gamma}$  a conjugate of~$\gamma$ over~$\Q$; then ${\gamma-\gamma'\ne \beta-\beta'}$ for any algebraic $\beta,\beta'$ of degree strictly smaller than~$m$.  Lemma~\ref{lwilms} is a special case, with ${\gamma':=\bar{\gamma}}$  and ${\beta':=\bar{\beta}}$, where ${z\mapsto\bar{z}}$ denotes complex conjugation. 

It suffices to prove that ${\gamma-\gamma'}$ is of degree ${m(m-1)}$, because the degree of ${\beta-\beta'}$ cannot exceed ${(m-1)^2}$. Let ${\gamma_1, \ldots, \gamma_m}$ be the conjugates of~$\gamma$ over~$\Q$.  Since the Galois group~$\SSS_m$ is doubly transitive, the ${m(m-1)}$ differences 
\begin{equation}
\label{ediffs}
\gamma_j-\gamma_k \qquad (1\le j,k\le m, \quad j\ne k)
\end{equation}
are all conjugate over~$\Q$, and we only have to show that they are distinct.

Thus, assume that ${\gamma_1-\gamma_2=\gamma_j-\gamma_k}$ for some ${(j,k)\ne (1,2)}$. If ${j=1}$ then ${\gamma_k=\gamma_2}$, a contradiction, and same for ${k=2}$. If ${(j,k)=(2,1)}$ then ${\gamma_1=\gamma_2}$, again a contradiction. 

If ${j=2}$ but ${k\ne 1}$, then we may assume that ${k=3}$, and we obtain 
\begin{equation}
\label{em=three}
\gamma_3=2\gamma_2-\gamma_1\in \Q(\gamma_1,\gamma_2). 
\end{equation}
Since the Galois group is~$\SSS_m$, this is possible only if ${m=3}$, in which case ${\gamma_1+\gamma_2+\gamma_3\in \Q}$. Together with~\eqref{em=three} this implies that ${\gamma_2\in \Q}$, a contradiction. Same for ${k=1}$ but ${j\ne 2}$.

Thus, we have ${j,k\ne 1,2}$, and we may assume that ${\gamma_1-\gamma_2=\gamma_3-\gamma_4}$, which implies that ${m=4}$. Hence  ${\gamma_1+\cdots+\gamma_4\in \Q}$, and we obtain ${\gamma_1+\gamma_4\in \Q}$, a contradiction. 
\end{proof}

Now we are ready to prove Theorem~\ref{thcomplex}.  Since~$\lambda$ is a Liouville number, there exist sequences of rational numbers $(\alpha_n)_{n\ge0}$ and of positive real numbers $(v_n)_{n\ge0}$ such that 
$$
\height(\alpha_n)\to \infty, \qquad v_n\to \infty, \qquad |\lambda-\alpha_n|\le e^{-u_n\height(\alpha_n)} \quad (n\ge 0). 
$$
The algebraic numbers ${\beta_n:= \gamma+\alpha_n}$ are all of degree~$m$, and satisfy 
$$
 \height(\alpha_n)-\height(\gamma)-\log2\le \height(\beta_n)\le \height(\alpha_n)+\height(\gamma)+\log2. 
$$
By discarding finitely many~$n$, we may assume that ${\height(\alpha_n)/2\le \height(\beta_n)\le 2\height(\alpha_n)}$ for all~$n$. Hence, setting ${\kappa:=\lambda+\gamma}$ and ${w_n:=v_n/2}$, we obtain a sequence $(\beta_n)$ of algebraic numbers of degree~$m$ and a sequence $(w_n)$ of positive real numbers such that 
$$
\height(\beta_n)\to \infty, \qquad w_n\to \infty, \qquad |\kappa-\beta_n|\le e^{-w_n\height(\beta_n)} \quad (n\ge 0).   
$$
To complete the proof, we have to show that there exists ${C>0}$ such that 
\begin{equation}
\label{egebeta}
|\kappa-\beta|\ge e^{-C(\height(\beta)+1)}
\end{equation}
for any algebraic~$\beta$ of degree strictly smaller than~$m$. 

Thus, let~$\beta$ be as above. Since ${\lambda \in \R}$, we have 
${|\kappa-\beta|\ge  |\im\gamma-\im\beta|}$. Lemma~\ref{lwilms} implies that ${\im\gamma\ne \im\beta}$. Note that  $\im\beta$ is of degree at most $m^2$; also,  from ${\im\beta=(\beta-\bar\beta)/2}$ we deduce
${\height(\im\beta) \le 2\height(\beta)+ 2\log 2}$. 
Liouville's inequality as in Proposition~\ref{prli}, applied to $\im\gamma$ and $\im\beta$,  implies that~\eqref{egebeta} holds 
with some ${C>0}$ depending only on~$\gamma$. The theorem is proved.

{\footnotesize

\bibliographystyle{amsplain}
\bibliography{alg_fun_liou}

\providecommand{\bysame}{\leavevmode\hbox to3em{\hrulefill}\thinspace}
\providecommand{\MR}{\relax\ifhmode\unskip\space\fi MR }
\providecommand{\MRhref}[2]{%
  \href{http://www.ams.org/mathscinet-getitem?mr=#1}{#2}
}
\providecommand{\href}[2]{#2}
\begin{thebibliography}{10}

\bibitem{Al79}
K{â}mil Alnia{ç}ik, \emph{On the subclasses {$U\sb{m}$}\ in {M}ahler's
  classification of the transcendental numbers}, \.Istanbul \"Univ. Fen Fak.
  Mecm. Ser. A \textbf{44} (1979), 39--82. \MR{728149}

\bibitem{BMM26}
Yuri Bilu, Diego Marques, and Carlos~Gustavo Moreira, \emph{{Values of
  generalized Liouville power series at algebraic numbers}}, Funct. Approx.
  Comment. Math. (2026).

\bibitem{BM06}
Yuri Bilu and David Masser, \emph{A quick proof of {S}prindzhuk's decomposition
  theorem}, More sets, graphs and numbers, Bolyai Soc. Math. Stud., vol.~15,
  Springer, Berlin, 2006, pp.~25--32. \MR{2223386}

\bibitem{BG06}
Enrico Bombieri and Walter Gubler, \emph{Heights in {D}iophantine geometry},
  New Mathematical Monographs, vol.~4, Cambridge University Press, Cambridge,
  2006. \MR{2216774}

\bibitem{Bu04}
Yann Bugeaud, \emph{Approximation by algebraic numbers}, Cambridge Tracts in
  Mathematics, vol. 160, Cambridge University Press, Cambridge, 2004.
  \MR{2136100}

\bibitem{CM14}
Ana~Paula Chaves and Diego Marques, \emph{An explicit family of
  {$U_m$}-numbers}, Elem. Math. \textbf{69} (2014), no.~1, 18--22. \MR{3182261}

\bibitem{CMT21}
Ana~Paula Chaves, Diego Marques, and Pavel Trojovsk\'y, \emph{On the arithmetic
  behavior of {L}iouville numbers under rational maps}, Bull. Braz. Math. Soc.
  (N.S.) \textbf{52} (2021), no.~4, 803--813. \MR{4325883}

\bibitem{Er62}
P.~Erd{\H{o}}s, \emph{Representations of real numbers as sums and products of
  {L}iouville numbers}, Michigan Math. J. \textbf{9} (1962), 59--60.
  \MR{133300}

\bibitem{Fa83}
G.~Faltings, \emph{Endlichkeitss\"atze f\"ur abelsche {V}ariet\"aten \"uber
  {Z}ahlk\"orpern}, Invent. Math. \textbf{73} (1983), no.~3, 349--366.
  \MR{718935}

\bibitem{GS07}
Robert~M. Guralnick and John Shareshian, \emph{Symmetric and alternating groups
  as monodromy groups of {R}iemann surfaces. {I}. {G}eneric covers and covers
  with many branch points}, Mem. Amer. Math. Soc. \textbf{189} (2007), no.~886,
  vi+128, With an appendix by Guralnick and R. Stafford. \MR{2343794}

\bibitem{Ha17}
Philipp Habegger, \emph{Quasi-equivalence of heights and {R}unge's theorem},
  Number theory---{D}iophantine {P}roblems, {U}niform {D}istribution and
  {A}pplications, Springer, Cham, 2017, pp.~257--280. \MR{3676405}

\bibitem{Ke11}
G{ü}lcan Keke{ç}, \emph{On some lacunary power series with algebraic
  coefficients and {M}ahler's {$U$}-numbers}, Appl. Math. Comput. \textbf{218}
  (2011), no.~3, 866--870. \MR{2831323}

\bibitem{Ke11a}
\bysame, \emph{On the values of some generalized lacunary power series with
  algebraic coefficients for {L}iouville number arguments}, Hacet. J. Math.
  Stat. \textbf{40} (2011), no.~5, 691--702. \MR{2896066}

\bibitem{Ke26}
\bysame, \emph{An explicit construction of $p$-adic $u_{m}$-numbers}, Indian J.
  Pure Appl. Math. (2026).

\bibitem{La83}
Serge Lang, \emph{Fundamentals of {D}iophantine geometry}, Springer-Verlag, New
  York, 1983. \MR{715605}

\bibitem{La02}
\bysame, \emph{Algebra}, third ed., Graduate Texts in Mathematics, vol. 211,
  Springer-Verlag, New York, 2002. \MR{1878556}

\bibitem{Le53}
W.~J. LeVeque, \emph{On {M}ahler's {$U$}-numbers}, J. London Math. Soc.
  \textbf{28} (1953), 220--229. \MR{54658}

\bibitem{MN22}
Tali Monderer and Danny Neftin, \emph{Symmetric {G}alois groups under
  specialization}, Israel J. Math. \textbf{248} (2022), no.~1, 201--227.
  \MR{4429280}

\bibitem{Mu99}
Peter M\"uller, \emph{Hilbert's irreducibility theorem for prime degree and
  general polynomials}, Israel J. Math. \textbf{109} (1999), 319--337.
  \MR{1679603}

\bibitem{Pe92}
G.~Petruska, \emph{On strong {L}iouville numbers}, Indag. Math. (N.S.)
  \textbf{3} (1992), no.~2, 211--218. \MR{1168349}

\bibitem{STW14}
K.~Senthil~Kumar, R.~Thangadurai, and M.~Waldschmidt, \emph{Liouville numbers
  and {S}chanuel's {C}onjecture}, Arch. Math. (Basel) \textbf{102} (2014),
  no.~1, 59--70. \MR{3154158}

\end{thebibliography}

\bigskip

\noindent 
Yuri Bilu: 
Institut de Mathématiques de Bordeaux, Université de Bordeaux \& CNRS, Talence, France; 
\url{yuri@math.u-bordeaux.fr}

\bigskip

\noindent
Diego Marques: Departamento de Matemática, Universidade de Brasília, Brasília, DF, Brazil; 
\url{diego@mat.unb.br}

}

\end{document}